\DeclareMathAlphabet{\mathpzc}{OT1}{pzc}{L}{it} 
\newtheorem{definition}{Definition}[section]
\newtheorem{proposition}[definition]{Proposition}
\newtheorem{theorem}[definition]{Theorem}
\newtheorem{remark}[definition]{Remark}
\newtheorem{lemma}[definition]{Lemma}
\def\R{\mathbb{R}}
\def\T{\mathbb{T}}
\def\Z{\mathbb{Z}}
\def\N{\mathbb{N}}
\def\Q{\mathbb{Q}}
\def\cB{\mathcal{B}}
\def\cC{\mathcal{C}}
\newcommand{\bea}{\begin{eqnarray}}
  \newcommand{\eea}{\end{eqnarray}}
  \newcommand{\beab}{\begin{eqnarray*}}
  \newcommand{\eeab}{\end{eqnarray*}}
  \newcommand{\be}{\begin{equation}}
  \newcommand{\ee}{\end{equation}}
\title{Rigidity of joinings for some measure preserving systems}
\author{Changguang Dong\footnote{Department of Mathematics, University of Maryland, College Park, MD 20742, USA, E-mail:dongchg@math.umd.edu}, Adam Kanigowski\footnote{Department of Mathematics, University of Maryland, College Park, MD 20742, USA, E-mail:adkanigowski@gmail.com}, Daren Wei\footnote{Department of Mathematics, The Pennsylvania State University, University Park, PA 16802, USA, E-mail:duw170@psu.edu  D. W. was partially supported by the NSF grant DMS-16-02409}}
\begin{document}
\baselineskip=14pt \maketitle
\begin{abstract}
We introduce two properties: strong R-property and $C(q)$-property, describing a special way of divergence of nearby trajectories for an abstract measure preserving system. We show that systems satisfying the strong R-property are disjoint (in the sense of Furstenberg) with systems satisfying the $C(q)$-property. Moreover, we show that if $u_t$ is a unipotent flow on $G/\Gamma$ with $\Gamma$ irreducible, then $u_t$ satisfies the $C(q)$-property provided that $u_t$ is not of the form $h_t\times\operatorname{id}$, where $h_t$ is the classical horocycle flow. Finally, we show that the strong R-property holds for all (smooth) time changes of horocycle flows and non-trivial time changes of bounded type Heisenberg nilflows.
\end{abstract}
\tableofcontents

\section{Introduction}

In this paper we study rigidity of joinings of measure preserving systems. Recall that a {\em joining} of two systems $(T,X,\mathcal{B},\mu)$ and $(S,Y,\mathcal{C},\nu)$ is a measure on $X\times Y$ invariant under $T\times S$ and whose projections on $X$ and $Y$ are respectively $\mu$ and $\nu$. Classifying joinings between given systems  is a classical and difficult problem in ergodic theory. Some of the most celebrated results on classifying joinings were established by M. Ratner (\cite{Rat3}, \cite{Rat2}, \cite{Rat4}) for certain algebraic systems: unipotent flows on finite volume homogeneous spaces induced by nilpotent translations on semisimple Lie groups. A direct corollary of the aforementioned work is that all joinings between unipotent systems have to be \emph{algebraic}. In particular, unless there is an algebraic reason, different unipotent systems are disjoint (the only joining is the product measure).

The main goal of this paper is to try to generalize the phenomena established by Ratner to more general class of measure preserving systems. One of the main feature of unipotent systems, established by M. Ratner for horocycle flows \cite{Rat2}, time changes of horocycle flows \cite{RatnerTimechange}, and generalized by D. Witte \cite{Wit} to general unipotent systems is a polynomial way of divergence of nearby orbits in well understood directions. For horocycle flows the aforementioned directions is just the flow direction and, for general unipotent systems, the divergence happens always along some direction from the centralizer of the flow. It is worth noticing that even though first discovered in the algebraic world, the above notions of divergence  make sense for an arbitrary measure preserving system. The (divergence) property observed by Ratner for horocycle flows is now called Ratner's property\footnote{Ratner proved it for horocycle flows \cite{Rat2} and its time changes \cite{RatnerTimechange}, Witte generalized it to general unipotent flows \cite{Wit}.} (see \cite{Thouvenot}), and it has been established for certain parabolic systems outside the algebraic world (see \cite{FK}, \cite{Fr-Lem}, \cite{Fr-Lem2}, \cite{AK2}, \cite{KK}). In this paper we formulate an abstract version of Witte's property (called $C(q)$-property) on divergence of unipotent orbits (see Definition \ref{def:cent}). We also introduce a stronger version of Ratner's property, strong R-property\footnote{Acronym for strong Ratner's property.} (Definition \ref{def:sr}). Our main disjointness theorem (Theorem \ref{thm:dis}) states that any flow with strong R-property is disjoint from any flow enjoying the $C(q)$-property. This theorem can be understood as an abstract version of Ratner's joinings rigidity theorem for unipotent flows. Moreover we show that if $u_t$ is a unipotent flow acting on $G/\Gamma$, where $\Gamma$ is irreducible then $u_t$ satisfies the $C(q)$-property unless $u_t$ is generated by $\begin{pmatrix}0&1\\0&0\end{pmatrix}\oplus\operatorname{id}\in\mathfrak{sl}(2,\R)\oplus \mathfrak g'$. Finally, we establish the strong R-property for all (smooth) time changes of horocycle flows, for horocycle flows acting on tangent spaces of surfaces with variable negative curvature (with uniform parametrization) and for (smooth) time changes of bounded type Heisenberg nilflows. To summarize all the above cases, our results can be stated as following (we refer to Section \ref{sec:basicDef}, Section \ref{sec:homogeneousflows} and Section \ref{sec:var} for notations):
\begin{theorem}\label{thm:main}
Let $u_t$ be a unipotent flow on homogeneous space $G/\Gamma$ generated by $U\in\mathfrak{g}$ with $G$ is a linear semisimple Lie group, $\Gamma\subset G$ is a cocompact lattice and   $\operatorname{GR}(U)>3$. Let
\begin{itemize}
\item $h_t^{\tau}$ be a time change of a horocycle flow on $\operatorname{SL}(2,\mathbb{R})/\Gamma$ where $\Gamma\subset\operatorname{SL}(2,\mathbb{R})$ is a cocompact lattice and $\tau\in C^1(\operatorname{SL}(2,\mathbb{R})/\Gamma)$, $\tau>0$;
\item $v_t$ be the horocycle flow on the tangent space of a compact surface with variable negative curvature with uniform parametrization;
\item $T_t^{\tau_1}$ be a time change of a bounded type Heisenberg nilflow with $\tau_1\in W^s(\mathbb{T}^2)$ for $s>\frac{7}{2}$.
\end{itemize}
Then $u_t$ is disjoint with $h_t^{\tau}$, $v_t$ and $T_t^{\tau_1}$, respectively.
\end{theorem}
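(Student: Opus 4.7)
The plan is to derive Theorem \ref{thm:main} as a three-fold application of the abstract disjointness statement Theorem \ref{thm:dis}: in each of the three assertions the unipotent flow $u_t$ plays the role of the flow with the $C(q)$-property, while $h_t^{\tau}$, $v_t$, $T_t^{\tau_1}$ in turn play the role of the flow with the strong R-property. Concretely, I would verify (i) that $u_t$ enjoys the $C(q)$-property, (ii) that each of the three target flows enjoys the strong R-property, and (iii) invoke Theorem \ref{thm:dis} in each of the three pairings.

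For step (i), the introduction already announces that any unipotent flow on an irreducible quotient $G/\Gamma$ has the $C(q)$-property unless its generator has the exceptional form $\begin{pmatrix}0&1\\0&0\end{pmatrix}\oplus\operatorname{id}\in\mathfrak{sl}(2,\R)\oplus \mathfrak g'$. The role of the numerical hypothesis $\operatorname{GR}(U) > 3$ is precisely to rule out this exceptional case: the growth rate attached to a classical horocycle generator (alone, or direct-summed with a trivial identity factor) is too small to exceed $3$, whereas $\operatorname{GR}(U) > 3$ forces the polynomial divergence in centralizer directions to be of sufficiently high degree to witness $C(q)$. The main technical point of step (i), beyond invoking the already-established result, is to make this equivalence between the analytic growth-rate condition and the algebraic exclusion of $h_t \oplus \operatorname{id}$ explicit.

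For step (ii), the strong R-property for each of $h_t^{\tau}$, $v_t$, and $T_t^{\tau_1}$ is the content of the dedicated sections of the paper. In each case the verification rests on a shearing-type argument adapting Ratner's original horocycle analysis: for $h_t^{\tau}$ one tracks the effect of a $C^1$ time change on the standard horocycle shear; for $v_t$ one exploits the uniform parametrization to import Ratner-type divergence estimates into variable negative curvature; and for $T_t^{\tau_1}$ the bounded type Diophantine condition combined with the Sobolev regularity $s > 7/2$ is used to produce strictly polynomial divergence in the renormalized direction. Applying Theorem \ref{thm:dis} to each of the three pairings then completes the proof. The main obstacle in this deduction is the matching between $\operatorname{GR}(U) > 3$ and the exclusion of the $h_t \oplus \operatorname{id}$ case; the substantive difficulties, namely the actual verifications of $C(q)$ for unipotent flows and of strong R for the three analytic families, live in the body of the paper rather than in this corollary.
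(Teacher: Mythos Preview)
Your proposal is correct and matches the paper's own proof exactly: Theorem~\ref{thm:main} is obtained by combining Theorem~\ref{thm:dis} with Proposition~\ref{pro:cent} (the $C(q)$-property for $u_t$) and Propositions~\ref{pro:horo1}, \ref{pro:horo2}, \ref{pro:heis} (the strong R-property for $h_t^\tau$, $v_t$, $T_t^{\tau_1}$ respectively). One minor simplification: Proposition~\ref{pro:cent} is stated directly under the hypothesis $\operatorname{GR}(U)>3$, so no separate ``matching'' between this condition and the exclusion of $h_t\times\operatorname{id}$ is needed for the deduction---that equivalence appears only as a side remark.
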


\textbf{Acknowledgements:} The authors would like to thank Svetlana Katok and Mariusz Lema\'{n}czyk for their careful reading and useful comments on a preliminary version of the paper. We would also like to thank anonymous referee for many useful comments and insightful suggestions on how to improve the paper.

\section{Definitions}
In this section we will state some basic definitions that will be used throughout the paper.
\subsection{Joinings, time changes of flows, special flows}\label{sec:basicDef}
Let $\varphi_t:(X,\mathcal{B},\mu)\to(X,\mathcal{B},\mu)$ and $\psi_t:(Y,\mathcal{C},\nu)\to(Y,\mathcal{C},\nu)$ be two
ergodic flows on probability standard Borel spaces.
\begin{definition}[Joinings]
A joining $\rho$ of $\varphi_t$ and $\psi_t$ is a $\varphi_t\times\psi_t-$invariant measure such that $\rho(X\times
C)=\nu(C)$ and $\rho(B\times Y)=\mu(B)$ for any $C\in\mathcal{C}$ and $B\in\mathcal{B}$. The set of joinings of $\varphi_t$ and $\psi_t$ is denoted by
$J(\varphi_t,\psi_t)$ and the set of ergodic joinings is denoted by $J^{e}(\varphi_t,\psi_t)$.
\end{definition}
\begin{definition}[Disjointness]
We say that $\varphi_t$ and $\psi_t$ are disjoint, denoted $\varphi_t\bot\psi_t$, if $J(\varphi_t,\psi_t)=\{\mu\otimes\nu\}$ (equivalently $J^e(\varphi_t,\psi_t)=\{\mu\otimes\nu\}$).
\end{definition}

Let $\tau\in L^1(X,\mu)$, $\tau>0$.

\begin{definition}[Time change] The flow $\varphi_t^{\tau}$ is called a time change (or a reparametrization) of the flow $\varphi_t$ if
$$\varphi_t^{\tau}(x)=\varphi_{\alpha(x,t)}(x), \text{ for all }(x,t)\in M\times\mathbb{R},$$
where the cocycle $\alpha(x,\cdot)$ is uniquely defined by the condition that
\begin{equation}\label{eq:time-change}
\int_0^{\alpha(x,t)}\tau(\varphi_s(x))ds=t.
\end{equation}
The flow $\varphi_t^{\tau}$ preserves the measure $d\mu^\tau:=\frac{\tau(\cdot)}{\int_X \tau d\mu}d\mu$.
\end{definition}

\paragraph{Special flows.}
Let $\lambda$ be the Lebesgue measure on $\mathbb{R}$. Let $T:(X,\mathcal{B},\mu)\to(X,\mathcal{B},\mu)$  and let $\tau\in L^1(X,\mathcal{B},\mu)$, $\tau>0$. We define the $\Z$-cocycle for $\tau$, by
$$
S_n(\tau)(x)=\left\{\begin{array}{ccc}
\tau(x)+\ldots+\tau(T^{n-1}x) &\mbox{if} & n>0\\
0&\mbox{if}& n=0\\
-\left(\tau(T^nx)+\ldots+\tau(T^{-1}x)\right)&\mbox{if} &n<0.\end{array}\right.
$$
Then we define the {\em special flow} $T_t^\tau$ on $X^\tau:=\{(x,s)\;:\;x\in X,0\leq s<\tau(x)\}$ by
\begin{equation}\label{specfl}
  T_t^\tau(x,s)=(T^{N(x,s,t)}x,s+t-S_{N(x,s,t)}(\tau)(x)),
\end{equation}
where $N(x,s,t)\in\mathbb{Z}$ is unique such that
\begin{displaymath}
S_{N(x,s,t)}(\tau)(x)\leq s+t<S_{N(x,s,t)+1}(\tau)(x).
\end{displaymath}
The flow $T_t^\tau$ preserves the measure $\mu^{\tau}:=\mu\otimes \lambda$ restricted to $X^\tau$. Moreover, if $X$ is a metric space with metric $d$, then so is $X^\tau$ with the product metric which we denote by $d^\tau$, i.e.\ $d^{\tau}((x,s),(x',s'))=d(x,x')+|s-s'|$.

\subsection{Homogeneous flows}\label{sec:homogeneousflows}
Let $G$ be a Lie group with Lie algebra $\mathfrak{g}$ and let $\mu$ be the Haar measure on $G$. Let $\exp : \mathfrak{g} \to G$ denote the exponential mapping of the Lie algebra $\mathfrak{g}$ onto $G$. Let $\Gamma \subset G$ be a {\em lattice}. We define the {\em homogeneous space}, $M:=G\slash \Gamma$. For $W\in \mathfrak{g}$, the homogeneous flow on $M$ is given by
\be\label{eq:1par}
\phi^W_t(x\Gamma)=(\exp(tW)x)\Gamma.
\ee
The flow $\phi^W_t$ preserves the Haar measure on $M$ (locally given by $\mu$). In this paper, we will study {\em unipotent flows} and {\em Heisenberg} nilflows.

\subsubsection{Unipotent flows}
We now assume additionally that  $G$ is a semisimple and that $W\in \mathfrak{g}$ is {\em unipotent}, i.e.\ the operator $ad_W:\mathfrak{g}\to \mathfrak{g}$ given by $ad_W(X)=[W,X]$ satisfies $ad_W^k=0$ for some $k\in \N$. Then the flow $\phi^W_t$ given by \eqref{eq:1par} is called a unipotent flow. Let $d=d_{G/\Gamma}$ be a right invariant metric on $G/\Gamma$. We recall the following important lemma for unipotent elements in $\mathfrak{g}$:

\begin{lemma}[\cite{KVW}]\label{lem:chainbasis} Let $W \in \mathfrak{g}$ be a unipotent element. Then there exists a basis  $\{X_i^n\}_{i=0}^{m_n}$, $n=1,\ldots,K$ of $\mathfrak{g}$  such that $X_0^n$ is in the centralizer
of $W$ for $n=1,\ldots,K$, and
\begin{equation}
ad_W(X_i^n) = X_{i-1}^n \mbox{ for all }1 \leq i \leq m_n\mbox{ and } n=1,\ldots,K.
\end{equation}
\end{lemma}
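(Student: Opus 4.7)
The plan is to recognize the lemma as a direct consequence of the Jordan normal form for a nilpotent endomorphism, applied to $N := ad_W : \mathfrak{g}\to \mathfrak{g}$. By hypothesis $N^k = 0$ for some $k\in \N$, so $N$ is nilpotent; taking $k$ minimal with this property, one has the strict ascending filtration
\[
\{0\}\subsetneq \Ker N \subsetneq \Ker N^2\subsetneq \cdots \subsetneq \Ker N^k=\mathfrak{g}.
\]
The statement of the lemma is exactly that $\mathfrak{g}$ admits a basis organized into $N$-cyclic chains with the bottom of each chain in $\Ker N$; and since $\Ker N = \Ker(ad_W)$ is by definition the centralizer of $W$ in $\mathfrak{g}$, this is precisely the conclusion required.

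I would build the chains top-down by the classical inductive procedure. First, pick a vector-space complement $V_k$ of $\Ker N^{k-1}$ inside $\Ker N^k=\mathfrak{g}$ and a basis $\{v_j^{(k)}\}_j$ of $V_k$. The key observation is that $N$ restricted to a complement of $\Ker N^{j-1}$ in $\Ker N^j$ is injective and sends it into a subspace of $\Ker N^{j-1}$ meeting $\Ker N^{j-2}$ trivially; hence $\{Nv_j^{(k)}\}_j$ is linearly independent modulo $\Ker N^{k-2}$, and I may extend it to a basis $\{v_j^{(k-1)}\}_j$ of a complement $V_{k-1}$ of $\Ker N^{k-2}$ in $\Ker N^{k-1}$. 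Iterating down to $V_1\subset \Ker N$ produces a collection of Jordan chains $v^n, Nv^n, \ldots, N^{m_n}v^n$ whose union is a basis of $\mathfrak{g}$ and whose bottoms $N^{m_n}v^n$ all lie in $\Ker N$.

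Finally, I would relabel each chain by setting $X_{m_n-i}^n := N^i v^n$ for $0\le i\le m_n$, so that $X_0^n = N^{m_n}v^n$ is in the centralizer of $W$ and
\[
ad_W(X_i^n) = N\bigl(N^{m_n-i}v^n\bigr) = N^{m_n-(i-1)}v^n = X_{i-1}^n
\]
for $1\le i\le m_n$, yielding the desired basis. The only delicate step in the whole argument is the inductive claim used to pass from level $j$ to level $j-1$---namely, that the image under $N$ of a basis of a complement of $\Ker N^{j-1}$ in $\Ker N^j$ remains independent modulo $\Ker N^{j-2}$---so this is where I would concentrate the verification; the rest is routine linear algebra.
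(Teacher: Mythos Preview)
Your argument is correct: the lemma is nothing more than the Jordan decomposition of the nilpotent operator $N=ad_W$ on the finite-dimensional space $\mathfrak{g}$, and your top-down construction of the cyclic chains is the standard one. The verification you flag as delicate---that if $\{v_i\}$ are independent modulo $\Ker N^{j-1}$ then $\{Nv_i\}$ are independent modulo $\Ker N^{j-2}$---is immediate: a relation $\sum a_i Nv_i\in\Ker N^{j-2}$ yields $N^{j-1}\bigl(\sum a_iv_i\bigr)=0$, forcing $\sum a_iv_i\in\Ker N^{j-1}$ and hence all $a_i=0$.

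As for comparison with the paper: the paper does not prove the lemma at all but merely cites \cite{KVW}, so there is nothing to compare. Your self-contained linear-algebra argument is exactly the expected proof and would serve as a complete justification.
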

The above basis is called a {\em chain basis} for $W$. We also associate the following number with a unipotent element $W$:
\be\label{eq:GR}
\operatorname{GR}(W):=\frac{1}{2}\sum_{n=1}^Km_n(m_n+1).
\ee
By Jacobson-Morozov theorem, $\operatorname{GR}(U)\geq 3$ for any unipotent $U$ (there always exists one chain of length $3$ coming from the $\mathfrak{sl}(2,\R)$-triple).
\subsubsection{Heisenberg flows}
The three dimensional Heisenberg group $G$ is given by
$$G:=\left\{\left(
                    \begin{array}{ccc}
                      1 & x & z \\
                      0 & 1 & y \\
                      0 & 0 & 1 \\
                    \end{array}
                  \right):x,y,z\in\mathbb{R}
\right\}.$$

Suppose $W$ is an element of the Lie algebra
$$\mathfrak{g}:=\left\{\left(
                    \begin{array}{ccc}
                      0 & a & b \\
                      0 & 0 & c \\
                      0 & 0 & 0 \\
                    \end{array}
                  \right):a,b,c\in\mathbb{R}
\right\}
$$ of $G$. The Heisenberg nilflow generated by $W$ is given by \eqref{eq:1par} on $G/\Gamma$ for a lattice $\Gamma\subset G$.
We will be mostly interested in smooth time changes of Heisenberg nilflows. For this, it will be convenient to work with a {\em special flow representation} of Heisenberg nilflows.
\smallskip

As shown in \cite{AFU}, every {\it ergodic} nilflow $\phi_t^W$ can be represented as a {\it special flow}, where the base transformation $T_{\alpha,\beta}:\T^2\to \T^2$ is given by $T_{\alpha,\beta}(x,y)=(x+\alpha,y+x+\beta)$ for $\alpha\in [0,1)\setminus \Q$, $\beta\in \R$ and under a constant roof function $f(x,y)=C_W>0$.  In this paper we consider {\em bounded type} nilflows, that is we say that $W\in \mathfrak{g}$ is of  bounded type if the corresponding $\alpha$ is of bounded type\footnote{Recall that $\alpha$ is of bounded type if there exists $C_\alpha>0$ such that $q_{n+1}\leq C_\alpha q_{n}$ for every $n\in \N$, where $\{q_n\}_{n=1}^{+\infty}$ is the sequence of denominators of convergents of $\alpha$.}


For a function $f\in L^1(\T^2), f>0$, let $T_t^{f,\alpha, \beta}$ denote the special flow over $T_{\alpha, \beta}$ and under $f$. It follows (see e.g.\ \cite{AFU}) that for every $\tau\in L^1(G/\Gamma)$, $\tau>0$, the time change $T_t^{W,\tau}$ of the Heisenberg nilflow generated by $W\in\mathfrak g$ is isomorphic to a special flow $T_t^{f_{\tau},\alpha,\beta}$, where the roof function $f_\tau$ is as smooth as $\tau$. Throughout the paper we will consider the flow $T_t^{f_\tau,\alpha,\beta}$ where $\alpha$ is of bounded type. To shorten the notation, we will for simplicity denote such flows by $T_t^\tau$ and call them flows of {\em bounded type}. In fact, we will concentrate on the case that $\tau\in W^s(\mathbb{T}^2)$ with $s>\frac{7}{2}$, where $W^s(\mathbb{T}^2)$ is the standard Sobolev space.

\subsection{Horocycle flows over
compact surfaces of variable
negative curvature}\label{sec:var}
Let $S$ be a compact, negatively curved, oriented surface, and $g_s$ be the corresponding geodesic flow on its unit tangent bundle, $M$. There exists a 1-dimensional unstable foliation $W^u(x)$ and 1-dimensional stable foliation $W^s(x)$ for any $x\in M$. Since $S$ is oriented, the leaves $W^u(x)$ and $W^s(x)$ can be given an orientation. We wish to define a continuous 
flow $u_t$ whose orbits are exactly $W^u(x)$, but such a flow will depend on the way we parameterize each leaf. We will study a very special parametrization (called Margulis parametrization \cite{Margulis}, or uniform parametrization).  Such parametrization was first studied by B.\ Marcus in \cite{Marcus} and later by J.\ Feldman and D.\ Ornstein in \cite{Fel-Orn}. Namely,  let $v_t$, $k_t$ be two flows along respectively $W^u$ and $W^s$ such that
\[ g_sv_tx = v_{e^{s}t}g_sx \mbox{ for every } t,s\in \R \mbox{ and } x\in M.\]
and
\[ g_sk_tx = k_{e^{-s}t}g_sx \mbox{ for every } t,s\in \R \mbox{ and } x\in M.\]

It then follows that there exists a measure (Margulis measure) $\mu$ on $M$ which is preserved by both $v_t$ and $k_t$. We remark that in most cases, this action of $v_t$ is only H\"older, and is not even generated by a vector field (and the measure $\mu$ is singular with respect to the volume). It is worth pointing out that this parametrization is very important in our proof that $v_t$ is disjoint with $u_t$ since we need the parametrization to establish the control of the divergence along the orbit direction.

\section{Disjointness criterion}
In this section we introduce our main disjointness criterion. In what follows, we consider measure preserving flows $T_t$ on $(X,\cB,\mu,d)$, where $X$ is a $\sigma$-compact metric space (with the metric $d$), $\cB$ is the Borel $\sigma$-algebra and $\mu$ is a probability measure on $X$. We make the following standing assumption for all flows that we consider:
\be\label{eq:contin}
\forall_{\eta>0}\;\exists_{\eta'>0}\text{ such that }\forall_{|t|<\eta'}\forall_{x\in X} \;\;d(T_tx,x)<\eta.
\ee
Notice that the above condition is satisfied for any continuous flow if the space $X$ is additionally compact.

First we introduce a strong version of Ratner's property. Recall that R-property in the context of horocycle flows used by Ratner can be formulated as following: for every $\epsilon>0$, there exists $\kappa>0$ such that if $x$ and $x'$ are sufficiently close and not on the same orbit of horocycle flow, then there exists $M$, which depends on $x$ and $x'$, such that $d(T_tx,T_{t+p}x')<\epsilon$ with $p=\pm1$ for $t\in[M,(1+\kappa)M]$. The reason we created strong R-property instead of using R-property is that strong R-property enables us to control  the divergence on the {\bf whole} interval $[0,M]$ along the orbit. We don't have control on when the $C(q)$-property is seen for given points and hence the strong R-property allows us to see the divergence on the interval on which the $C(q$)-property is seen. This is essential in the proof of our disjointness criterion. In the following definition $\lambda$ denotes the Lebesgue measure on $\R$.
\begin{definition}[\mbox{Strong R-property}]\label{def:sr} Let $T_t$ be an ergodic flow on $(X,\cB,\mu,d)$ and let $p>0$. We say that $T_t$ has the {\em strong $R(p)$-property}, if for every $\epsilon>0$ and every $N\in\mathbb{N}$, there exists $\kappa=\kappa(\epsilon)\in (0,\epsilon)$, $Z\in \cB$, $\mu(Z)\geq 1-\epsilon$ and $\delta>0$ such that for every $x,x'\in Z$ with $d(x,x')<\delta$ and $x$ not in the orbit of $x'$ there exists $M_1\in \N$ with $M_1\geq N$ such that the following holds:
\begin{itemize}
\item[\textbf{R1}.] for every $L\in[\kappa^{-2}, M_1]$ there exists $p_L\in[-p,p]$ such that
$$
\lambda\left(\left\{t\in [L,L+\kappa L]\;:\;d(T_tx,T_{t+p_L}x')<\epsilon\right\}\right)\geq (1-\epsilon)\kappa L,
$$
and moreover $|p_M|\geq p/2$.
\end{itemize}
The flow $T_t$ is said to have the {\em strong R-property}, if it has the strong R(p)-property for some $p>0$.
\end{definition}

\begin{remark}
In the above definition of the strong R-property the condition $|p_M|\geq p/2$ can be modified to $|p_M|\geq C\cdot p$, where $C$ is an independent constant.
\end{remark}

\begin{remark}\label{rem:forback} Analogously to \cite{FK}, one can define a {\em switchable} version of the strong R-property by introducing the following condition (as an alternative to \textbf{R1}):
\item[\textbf{R2}.] for every $L'\in[\kappa^{-2}, M]$ there exists $p_{L'}\in[-p,p]$ such that
$$
\left|\left\{t\in [L',L'+\kappa L']\;:\;d(T_{-t}x,T_{-t+p_{L'}}x')<\epsilon\right\}\right|\geq (1-\epsilon)\kappa L',
$$
and moreover $|p_{L'}|\geq p/2$.

In this paper we focus on time changes of horocycle flows and Heisenberg nilflows for which (as proved in Section \ref{sec:rp}) one can show the strong R-property as stated in Definition \ref{def:sr}. The strong switchable version (for which, depending on $x,x'$, at least one of \textbf{R1} or \textbf{R2} is satisfied) might be used for flows satisfying the SWR-property (such as Arnol'd flows or von Neumann flows).
\end{remark}

We now introduce one of the main new definitions in the paper which describes a (parabolic) divergence for special directions. For a probability space $(Y,\cC,\nu,d)$ denote $UC(Y,d):=\{H\in Aut(Y,\cC,\nu)\;:\; H\text{ is uniformly continuous}\}$.

\begin{definition}\label{def:cent} Let $R_t$ be an ergodic flow on  $(Y,\cC,\nu,d)$. Let ${\bf D}\subset Aut(Y,\cC,\nu)\cap UC(Y,d)$ be compact in the uniform topology\footnote{Recall that for $V,W\in Aut(Y,\cC,\nu)$ the uniform metric is defined by $\bar{d}(V,W)=\sup_{y\in Y}d(Vy,Wy)$. We assume that ${\bf D}$ is compact for $\bar{d}$.} and let $q>0$. The flow $R_t$ has the {\em $C(\textbf{D},q)$-property} if there exists a compact\footnote{Compactness in uniform topology.} set ${\bf\tilde{D}}\subset {\bf D}\setminus \{Id\}$, a sequence $(A_k)_{k\in \N}\in Aut(Y,\cC,\nu)$, $A_k\to Id$ uniformly such that for every $\epsilon>0$ and $N\in \N$ there exists $\kappa=\kappa(\epsilon)>0$ and $\delta=\delta(\epsilon,N)>0$ such that for every  $k\in \N$ satisfying $d(A_k,Id)<\delta$, every $y\in Y$ and $y'=A_ky$ there exists $M_2\geq N$ such that
\begin{itemize}
\item[\textbf{C1}.] for every $L\in [\kappa^{-2}, M_2]$ there exists $S_{L}\in {\bf D}$ such that $S_{L}\circ R_r$ is ergodic for $r\in[-q,-q/2]\cup[q/2,q]$ and
$$\lambda\left(\left\{t\in [L,L+\kappa L]\;:\;d(R_ty,S_{L}R_{t}y')<\epsilon\right\}\right)\geq (1-\epsilon)\kappa L,$$
\item[\textbf{C2}.] $S_M\in {\bf\tilde{D}}$ and $S_M\circ R_r$ is ergodic for every $r\in [-q,q]$.
\end{itemize}

We say that $R_t$ has the {\em $C(q)$-property} if it has the $C({\bf{D}},q)$-property for some (compact) ${\bf{D}}\subset Aut(Y,\cC,\nu)\cap UC(Y,d)$.
\end{definition}

We will now state our main disjointness criterion:
\begin{theorem}\label{thm:dis}Let $T_t$ be a weakly mixing flow on $(X,\cB,\mu, d_1)$ with strong R-property and let $R_t$ be a flow on $(Y,\cC,\nu, d_2)$ satisfying the $C(q)$-property for every $q>0$. Then $T_t$ and $S_t$ are disjoint.
\end{theorem}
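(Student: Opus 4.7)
The plan is an argument by contradiction in the spirit of Ratner--Witte: assume $\rho\in J^e(T_t,R_t)$ with $\rho\neq \mu\otimes \nu$, and show that the strong R-property of $T_t$ together with the $C(q)$-property of $R_t$ (applied for $q\geq p$, where $p$ comes from the strong R-property) force $\rho$ to be invariant under $Id_X\times H$ for some $\nu$-ergodic $H$; a standard ergodic-decomposition argument then yields $\rho=\mu\otimes \nu$, a contradiction.

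Fix small $\epsilon>0$ and large $N\in\N$. I would first construct good pairs $(x,y),(x',y')\in X\times Y$ that are both Birkhoff typical for $\rho$ relative to a countable dense family of uniformly continuous test functions on $X\times Y$, with $x,x'\in Z$ satisfying $d_1(x,x')<\delta$ and lying on distinct $T_t$-orbits, and with $y'=A_ky$ for some $k$ large enough that $\bar d(A_k,Id)<\delta$. Existence of such pairs relies on $(Id\times A_k)_*\rho\to\rho$ weakly as $k\to\infty$ (since $A_k\to Id$ uniformly), ensuring that $\rho$-most $(x,y)$ have $(x,A_ky)$ still in a quantitative typicality set; a fibrewise argument combined with the fact that $T_t$-orbits are $\mu\otimes\mu$-null in $X\times X$ (a consequence of weak mixing of $T_t$) then yields such pairs with positive $\rho$-probability. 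With such pairs fixed, I apply simultaneously the strong R-property to $(x,x')$ (producing $M_1\geq N$, $p_L\in[-p,p]$ with $|p_{M_1}|\geq p/2$, and the closeness estimate on $[L,L+\kappa L]$ for $L\in[\kappa^{-2},M_1]$) and the $C(q)$-property to $(y,y')$ (producing $M_2\geq N$, $S_L\in\mathbf{D}$ with $S_{M_2}\in\tilde{\mathbf{D}}$, and its closeness estimate). On the intersection of the two good time sets inside $[L,L+\kappa L]$ the pair $(T_tx,R_ty)$ is $\epsilon$-close to $(T_{p_L}\times S_L)(T_tx',R_ty')$.

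For any bounded uniformly continuous $F$ on $X\times Y$, averaging over $[L,L+\kappa L]$ and using Birkhoff typicality of both pairs gives
\[\Bigl|\int F\,d\rho-\int F\,d\bigl((T_{p_L}\times S_L)_*\rho\bigr)\Bigr|\lesssim \epsilon.\]
Taking $L=M:=\min(M_1,M_2)$, letting $\epsilon\to 0$ and $N\to\infty$, and extracting accumulation points via compactness of $\tilde{\mathbf{D}}$ and of $[-p,-p/2]\cup[p/2,p]$ yields $p^*$ and $S^*\in\tilde{\mathbf{D}}$ with the exact invariance $\rho=(T_{p^*}\times S^*)_*\rho$. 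Composing with the $(T_{-p^*}\times R_{-p^*})$-invariance of $\rho$ produces invariance of $\rho$ under $Id_X\times H$ with $H:=S^*\circ R_{-p^*}$; since $|p^*|\leq q$, property \textbf{C2} guarantees that $H$ is $\nu$-ergodic. Decomposing $\rho$ into its $Id_X\times H$-ergodic components $\rho=\int\rho_\omega\,d\pi(\omega)$, each component has the form $\rho_\omega=\delta_{x_\omega}\otimes\eta_\omega$ with $\eta_\omega$ an $H$-ergodic probability measure on $Y$ (the $X$-marginal of any $Id_X\times H$-ergodic measure must be a Dirac mass). Projecting on $Y$ gives $\nu=\int\eta_\omega\,d\pi(\omega)$; since distinct $H$-ergodic measures are mutually singular and $\nu$ is itself $H$-ergodic, necessarily $\eta_\omega=\nu$ for $\pi$-a.e.\ $\omega$, whence $\rho=\mu\otimes\nu$.

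The main obstacle is the construction of good pairs in the first step: the condition $y'=A_ky$ is a measure-zero relation, so joint Birkhoff typicality of $(x,y)$ and $(x',A_ky)$ is not automatic from $\rho$-full measure of the typical set. The delicate point is to combine the weak convergence $(Id\times A_k)_*\rho\to\rho$, a fibrewise argument over the $Y$-coordinate, and the weak mixing of $T_t$ to find pairs with positive $\rho$-probability simultaneously satisfying all the closeness, typicality, and different-orbit constraints, while correctly interleaving the quantifiers $\epsilon$, $\delta$, $k$, $N$, and $L\in[\kappa^{-2},M]$ so that the two Ratner-type properties can be applied jointly.
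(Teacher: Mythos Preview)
Your skeleton is the paper's: build close pairs $(x,y)$ and $(x',y')=(x',A_ky)$, apply \textbf{R1} and \textbf{C1}/\textbf{C2} at the common scale $\tilde M=\min(M_1,M_2)$, and read off near-invariance of $\rho$ under $T_{p_{\tilde M}}\times S_{\tilde M}$. The divergence---and the gap---is in your endgame.

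The problem is the limit step. You let $\epsilon\to 0$, $N\to\infty$ and extract $(p^*,S^*)$, then invoke ergodicity of $H=S^*\circ R_{-p^*}$. But ergodicity is not closed under uniform limits: \textbf{C1} and \textbf{C2} only guarantee that the \emph{specific} $S_{\tilde M}\circ R_r$ is ergodic for the relevant $r$, not that a limit $S^*\circ R_{-p^*}$ is. You also merge the two cases incorrectly: when $\tilde M=M_1$ one has $|p_{\tilde M}|\geq p/2$ but only $S_{\tilde M}\in\mathbf{D}$, while when $\tilde M=M_2$ one has $S_{\tilde M}\in\tilde{\mathbf{D}}$ but merely $|p_{\tilde M}|\leq p$; one cannot get both $|p^*|\geq p/2$ and $S^*\in\tilde{\mathbf{D}}$ at once. (A minor point: one must take $q=p$, not $q\geq p$, so that $[p/2,p]$ sits inside $[q/2,q]$ as \textbf{C1} requires.) Without ergodicity of $H$ your final ergodic-decomposition step collapses.

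The paper circumvents this by never taking a limit. It first uses that $\rho\neq\mu\otimes\nu$ forces, for every $(r,S)$ with $S\circ R_r$ ergodic, a strictly positive value of $\Psi(r,S)=\sum_{i,j}2^{-i-j}|\rho(A_i\times(R_{-r}S^{-1})B_j)-\rho(A_i\times B_j)|$, and by compactness extracts a \emph{uniform} gap $\epsilon_0>0$ and a finite index set $\{1,\dots,H\}$. Then a uniform ergodic theorem over the whole parameter set $[-p,p]\times\mathbf{D}$ (Lemma~\ref{lem:compact}) lets one work at a single fixed scale: the near-invariance under the concrete $(p_{\tilde M},S_{\tilde M})$---which \emph{does} lie in the ergodic set by \textbf{C1}/\textbf{C2}---forces $|\rho(A_i\times(R_{p_{\tilde M}}S_{\tilde M}^{-1})B_j)-\rho(A_i\times B_j)|<\epsilon_0$ for all $i,j\leq H$, a direct contradiction. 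For the pair construction you correctly flag as delicate, the paper makes your fibrewise idea precise via a measurable selector $y\mapsto x(y)$ with $(x(y),y)\in Z'$ (Kunugui) followed by Egorov to get uniform continuity of the selector; then $\nu(A_k^{-1}\tilde Z_Y\cap\tilde Z_Y)>0$ produces $y,y'=A_ky$ with $d_1(x(y),x(y'))<\delta$.
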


Some parts of the proof of Theorem \ref{thm:dis} follow similar steps to the proof of Theorem 5.9. in \cite{Fr-Lem2} and Theorem 3.1. in \cite{KLU}. We will provide a full proof for completeness. For $B\in \cC$ and $\epsilon>0$ let $V_\epsilon(B)=\{y\in Y:d_2(y,B)<\epsilon\}$. First, we need the following lemma:
\begin{lemma}[Lemma 5.6.\ in \cite{Fr-Lem2}]\label{lem:count} For every $B\in \cC$, we have $\{\epsilon>0\;:\; \nu(\partial V_\epsilon(B))>0\}$ is at most countable\footnote{Recall that for a set $B\in \cC$, $\partial B$ denote its boundary.}.
\end{lemma}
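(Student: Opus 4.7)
The claim is a standard measure-theoretic fact, so my plan is short. The guiding observation is that the boundaries of the open $\epsilon$-neighbourhoods of a fixed set $B$ lie in pairwise disjoint level sets of the distance function $y\mapsto d_2(y,B)$, and a probability measure can only charge countably many pairwise disjoint measurable sets with positive measure.

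In detail, I would first let $f(y):=d_2(y,B)=\inf_{b\in B}d_2(y,b)$ and recall that $f$ is $1$-Lipschitz (hence continuous) on $Y$, so the sublevel set $\{f<\epsilon\}$ is open and equals $V_\epsilon(B)$. From continuity of $f$ one obtains the inclusion
\[
\partial V_\epsilon(B)\;\subset\;\{y\in Y:f(y)=\epsilon\}=:L_\epsilon,
\]
because any boundary point is a limit of points with $f<\epsilon$ and a limit of points with $f\geq\epsilon$, forcing $f=\epsilon$ at the limit. The crucial structural observation is that the level sets $\{L_\epsilon\}_{\epsilon>0}$ are pairwise disjoint, since $f$ is single-valued.

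Now suppose for contradiction that the set
\[
E:=\{\epsilon>0:\nu(\partial V_\epsilon(B))>0\}
\]
is uncountable. Writing $E=\bigcup_{n\in\N}E_n$ with $E_n:=\{\epsilon>0:\nu(\partial V_\epsilon(B))>1/n\}$, some $E_n$ must be uncountable, in particular infinite. Pick any countably infinite subfamily $\epsilon_1,\epsilon_2,\ldots\in E_n$; the sets $\partial V_{\epsilon_k}(B)\subset L_{\epsilon_k}$ are pairwise disjoint measurable subsets of $Y$, each of measure strictly greater than $1/n$. Summing gives
\[
1=\nu(Y)\;\geq\;\sum_{k\geq 1}\nu(\partial V_{\epsilon_k}(B))\;>\;\sum_{k\geq 1}\frac{1}{n}=+\infty,
\]
a contradiction. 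Hence $E$ is at most countable, which is the claim.

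The proof has no real obstacle; the only point that requires a small verification is the inclusion $\partial V_\epsilon(B)\subset L_\epsilon$, which rests on continuity of the distance function $f$. Everything else is the elementary "countably many disjoint positive-measure sets in a probability space" argument, so I would present it as a two-line reminder without further fuss.
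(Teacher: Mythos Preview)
Your proof is correct. The paper does not supply its own argument for this lemma; it simply quotes the result from \cite{Fr-Lem2}, so there is no in-paper proof to compare against. Your route via the Lipschitz distance function $f(y)=d_2(y,B)$, the inclusion $\partial V_\epsilon(B)\subset\{f=\epsilon\}$, and the disjointness of level sets is exactly the standard argument one would expect, and nothing more is needed.
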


We will also use the following remark.
\begin{remark}[Remark 5.7.\ in \cite{Fr-Lem2}]\label{rem:reg}
Let $(X,d)$ be a Polish space and let $\mu$ be a (regular) probability measure on $(X,\cB)$. There exists a dense family $\{A_i\}_{i\geq 1}$ in $\cB$ such that $\mu(\partial A_i)=0$ for every $i\geq 1$.
\end{remark}

The main lemma in the proof of Theorem \ref{thm:dis} is the following\footnote{This lemma should be compared with Lemma 5.4. in \cite{Fr-Lem2}.}:
\begin{lemma}\label{lem:compact}Let $T_t$ and $R_t$ be two ergodic flows on respectively $(X,\cB,\mu,d_1)$ and $(Y,\cC,\nu,d_2)$. Let $p>0$ and ${\bf D}\subset Aut(Y,\cC,\nu)\cap UC(Y,d_2)$ be compact in the uniform topology. Take $\rho\in J^e(T_t,R_t)$ and let $A\in \cB$ and $B\in \cC$ with $\nu(\partial B)=0$. Then for every $\epsilon,\kappa,\delta>0$ there exists $N=N(\epsilon,\delta,\kappa)\in \R$ and a set $U\in \cB\otimes \cC$, $\rho(U)\geq 1-\delta$ such that for every $(x,y)\in U$ and every $L,M\geq N$ with $\frac{L}{M}\geq \kappa$, we have
$$
\left|\frac{1}{L}\int_{M}^{M+L}\chi_{A\times (R_{-r}\circ S^{-1})(B)}((T_t\times R_t)(x,y))-\rho(A\times(R_{-r}\circ S^{-1})(B))\right|<\epsilon,
$$
for every $(r,S)\in [-p,p]\times {\bf D}$.
\end{lemma}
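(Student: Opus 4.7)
The plan is to combine three ingredients: a parameter-free sandwich approximation of $\chi_{R_{-r}\circ S^{-1}(B)}$, the Birkhoff ergodic theorem applied to the ergodic joining $\rho$, and an elementary algebraic identity that converts a Birkhoff average on $[0,L']$ into one on $[M,M+L]$ using $L/M\ge\kappa$.

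Since $\nu(\partial B)=0$, for each small $\eta>0$ I introduce $\widehat B^+_\eta:=V_\eta(B)$ and $\widehat B^-_\eta:=Y\setminus V_\eta(Y\setminus B)$. Then $\widehat B^-_\eta\subset B\subset \widehat B^+_\eta$, and since the decreasing family $V_\eta(B)\cap V_\eta(Y\setminus B)$ shrinks to $\partial B$, continuity of measure gives $\nu(\widehat B^+_\eta\setminus \widehat B^-_\eta)\to 0$ as $\eta\to 0$. A direct check shows that whenever $d_2(y,y')<\eta$ one has $\chi_{\widehat B^-_\eta}(y)\le \chi_B(y')\le \chi_{\widehat B^+_\eta}(y)$. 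Setting $y'=SR_r y$ gives the pointwise sandwich
$$\chi_{R_{-r_j}\circ S_j^{-1}(\widehat B^-_\eta)}(y)\le \chi_{R_{-r}\circ S^{-1}(B)}(y)\le \chi_{R_{-r_j}\circ S_j^{-1}(\widehat B^+_\eta)}(y)$$
whenever $\sup_y d_2(SR_r y,S_j R_{r_j}y)<\eta$. To arrange this uniformly in $(r,S)$, I observe that the compactness of ${\bf D}$ in the uniform metric $\bar d$ together with each $S\in{\bf D}$ being uniformly continuous yields a common modulus of continuity for all of ${\bf D}$ (use a finite $\bar d$-net and take the worst modulus). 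Combined with the standing assumption \eqref{eq:contin} for $R_t$, there exists $\eta''>0$ such that $|r-r'|<\eta''$ and $\bar d(S,S')<\eta''$ force $\sup_y d_2(SR_r y,S'R_{r'}y)<\eta$. Compactness of $[-p,p]\times{\bf D}$ in the product metric then lets me pick a finite $\eta''$-net $\{(r_j,S_j)\}_{j=1}^m$, so the sandwich holds for every $(r,S)$ relative to some index $j$.

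Next, I apply Birkhoff's theorem to $\rho$ for each of the $2m$ characteristic functions $\chi_{A\times R_{-r_j}\circ S_j^{-1}(\widehat B^\pm_\eta)}$. A standard Egorov-type argument (the set on which the Birkhoff averages are within $\epsilon'$ of the integral for all $L'\ge N_0$ grows to full measure as $N_0\to\infty$) produces $N_0$ and a set $U\in\cB\otimes\cC$ with $\rho(U)\ge 1-\delta$ on which, simultaneously for each such $f$ and every $L'\ge N_0$,
$$\Bigl|\tfrac1{L'}\int_0^{L'}f\bigl((T_t\times R_t)(x,y)\bigr)\,dt-\int f\,d\rho\Bigr|<\epsilon'.$$
Writing $\tfrac1L\int_M^{M+L}=\tfrac{M+L}L\cdot\tfrac1{M+L}\int_0^{M+L}-\tfrac ML\cdot\tfrac1M\int_0^M$ and using $L/M\ge\kappa$, the error on $[M,M+L]$ is at most $(1+2/\kappa)\epsilon'$; choosing $\epsilon'$ small enough in terms of $\epsilon$ and $\kappa$, and setting $N:=N_0$, makes this error below $\epsilon/4$ for every bounding function. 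The pointwise sandwich, combined with the fact that $\rho$ projects to $\nu$ on $Y$ so that $\rho(A\times R_{-r_j}\circ S_j^{-1}(\widehat B^+_\eta))-\rho(A\times R_{-r_j}\circ S_j^{-1}(\widehat B^-_\eta))\le\nu(\widehat B^+_\eta\setminus\widehat B^-_\eta)<\epsilon/4$ while the same two quantities also sandwich $\rho(A\times R_{-r}\circ S^{-1}(B))$ by integrating the pointwise inequality, yields via the triangle inequality the required estimate for every $(r,S)\in[-p,p]\times{\bf D}$.

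The main obstacle is the genuine \emph{uniformity} over the infinite-dimensional parameter family $(r,S)\in[-p,p]\times{\bf D}$, which is what forces the interplay of $\nu(\partial B)=0$, compactness of ${\bf D}$ in the uniform metric, uniform continuity of its elements, and assumption \eqref{eq:contin} on $R_t$. The passage from the standard Birkhoff interval $[0,L']$ to the shifted interval $[M,M+L]$ is a routine matter, fully controlled by $L/M\ge\kappa$.
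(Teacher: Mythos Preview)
Your proof is correct and follows essentially the same route as the paper: use $\nu(\partial B)=0$ to sandwich $B$ between $\widehat B^-_\eta=[V_\eta(B^c)]^c$ and $\widehat B^+_\eta=V_\eta(B)$, reduce the parameter space $[-p,p]\times{\bf D}$ to a finite net via compactness and uniform continuity (together with \eqref{eq:contin}), apply Birkhoff to the resulting finite family of characteristic functions, and combine. The only cosmetic differences are that the paper fixes the finite net in ${\bf D}$ first and then invokes uniform continuity of just those finitely many elements (whereas you first extract a common modulus for all of ${\bf D}$), and that the paper states the ergodic theorem directly for the averages on $[M,M+L]$ with $L/M\ge\kappa$, absorbing the algebraic decomposition you wrote out explicitly.
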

\begin{proof}Fix $\epsilon,\delta,\kappa>0$. Since $\nu(\partial B)=0$, there exists $0<\epsilon'<\epsilon$ such that
\be\label{eq:nei}
\nu(V_{\epsilon'}(B))-\nu(B)< \epsilon/10,
\ee
and
\be\label{eq:nei2}
\nu(B)-\nu([V_{\epsilon'}(B^c)]^c)<\epsilon/10.
\ee

Let $Q\times {\bf D'}\subset [-p,p]\times {\bf D}$ be a finite set such that
\be\label{eq:cmo1}
\forall_{(r,S)\in [-p,p]\times {\bf D}}\exists_{(r',S')\in Q\times {\bf D'}}\text{ such that  } \sup_{y\in Y}d_2(S\circ R_ry,S'\circ R_{r'}y)<\epsilon'/10.
\ee
Notice that existence of $Q\times {\bf D'}$ follows from \eqref{eq:contin} and compactness of ${\bf D}$ (in uniform topology): we first pick a finite set  ${\bf D'}\subset {\bf D}$ such that for every $S\in {\bf D}$ there exists $S'\in {\bf D'}$ satisfying  $\bar{d}(S,S')=\sup_{y\in Y}d_2(Sy,S'y)<\epsilon'/100$. Then, since ${\bf D'}$ is finite and every element in ${\bf D'}$ is uniformly continuous there exists $\epsilon''>0$ such that for every $S'\in {\bf D'}$ and every $y,y'\in Y$ with $d_2(y,y')<\epsilon''$, we have $d_2(S'y,S'y')<\epsilon'/100$. Then, using \eqref{eq:contin}, we can pick a finite set $Q\in [-p,p]$ such that for every $r\in [-p,p]$ there exists $r'\in Q$ such that $\sup_{y\in Y} d_2(R_ry,R_{r'}y)<\epsilon''$. With this choice of parameters, we have
$$
d_2(S\circ R_ry,S'\circ R_{r'}y)\leq d_2(S\circ R_ry,S'\circ R_{r}y)+d_2(S'\circ R_{r}y,S'\circ R_{r'}y)<\epsilon'/50.
$$
This finishes the proof of existence of $Q\times {\bf D'}$.

For $(r,S)\in [-p,p]\times {\bf D}$ let $(r',S')\in Q\times {\bf D'}$ be such that \eqref{eq:cmo1} is satisfied.  By the construction of ${\bf D'}$ and \eqref{eq:cmo1}, for every $y\in Y$,
\be\label{eq:alu}
d_2(y,S'\circ R_{r'-r}S^{-1}y)\leq d_2(y,S'S^{-1}y)+d_2(S'S^{-1}y,S'R_{r'-r}S^{-1}y)\leq \epsilon'/10+\epsilon'/10,
\ee
(and analogous inequality holds for $S'$ and $S$ switched).
Therefore
$$
R_{-r}\circ S^{-1}(B)\subset R_{-r'}\circ S'^{-1}(V_{\epsilon'}(B))
$$
and analogously
$$
R_{-r'}\circ S'^{-1}(B)\subset R_{-r}\circ S^{-1}(V_{\epsilon'}(B)).
$$
The two above inclusion together with \eqref{eq:nei} imply that
\begin{multline}\label{eq:c000}
|\rho(A\times(R_{-r'}\circ S'^{-1})(V_{\epsilon'}(B)))-\rho(A\times(R_{-r}\circ S^{-1})(B))|\leq\\
|\rho(A\times(R_{-r'}\circ S'^{-1})(V_{\epsilon'}(B)))-\rho(A\times(R_{-r'}\circ S'^{-1})(B))|+\\
|\rho(A\times(R_{-r'}\circ S'^{-1})(B))-\rho(A\times(R_{-r}\circ S^{-1})(B))|\leq
\epsilon/3.
\end{multline}
Reasoning analogously and using \eqref{eq:nei2}, we also get
\be\label{eq:c001}
|\rho(A\times(R_{-r'}\circ S'^{-1})([V_{\epsilon'}(B^c)]^c))-\rho(A\times(R_{-r}\circ S^{-1})(B))|<\epsilon/3.
\ee
Let
\be\label{eq:tilc}
\tilde{V}(r',S'):=\left[A\times (R_{-r'}\circ S'^{-1})([V_{\epsilon'}(B^c)]^c)\right]^c.
\ee
We apply the ergodic theorem to (finitely many) functions $\chi_{A\times R_{-r'}\circ S'^{-1}(V_{\epsilon'}(B))}$ and $\chi_{\tilde{V}(r',S')}$, $(r',S')\in Q\times {\bf D'}$ to get that there exists $N=N(\epsilon, \kappa,\delta)$ and a set $U=U(\epsilon,\kappa,\delta)\in \cB\otimes \cC$ with $\rho(U)>1-\delta$ such that for every $(x,y)\in U$, every $L,M\geq N$ with $\frac{L}{M}\geq \kappa$ and every $(r',S')\in Q\times {\bf D'}$, we have (with $\bar{\delta}=\min\{\epsilon/10,\delta\}$)
\be\label{eq:bet2}
\left|\frac{1}{L}\int_{M}^{M+L}\chi_{A\times (R_{-r'}\circ S'^{-1})(V_{\epsilon'}(B))}((T_t\times R_t)(x,y))dt-\rho(A\times(R_{-r'}\circ S'^{-1})(V_{\epsilon'}(B)))\right|<\bar{\delta}
\ee
and
\be\label{eq:bet3}
\left|\frac{1}{L}\int_{M}^{M+L}\chi_{\tilde{V}(r',S')}((T_t\times R_t)(x,y))dt-\rho(\tilde{V}(r',S'))\right|<\bar{\delta}.
\ee
Fix $(r,S)\in [-p,p]\times {\bf D}$ and let $(r',S')\in Q\times {\bf D'}$ be such that \eqref{eq:cmo1} holds for $(r,S)$ and $(r',S')$. Notice that if $t\in \R$ is such that $(T_t\times R_t)(x,y)\in A\times (R_{-r}\circ S^{-1})(B)$ then by \eqref{eq:cmo1} (see also \eqref{eq:alu}), we have  $(T_t\times R_t)(x,y)\in A\times (R_{-r'}\circ S'^{-1})(V_{\epsilon'}(B))$.  Therefore by \eqref{eq:bet2} and \eqref{eq:c000}, we have
\begin{multline}\label{eq:N1}
\frac{1}{L}\int_{M}^{M+L}\chi_{A\times (R_{-r}\circ S^{-1})(B)}((T_t\times R_t)(x,y))dt\leq \\
\frac{1}{L}\int_{M}^{M+L}\chi_{A\times (R_{-r'}\circ S'^{-1})(V_{\epsilon'}(B))}((T_t\times R_t)(x,y))dt\leq \rho(A\times(R_{-r'}\circ S'^{-1})(V_{\epsilon'}(B)))+\bar{\delta}\leq\\
 \rho(A\times(R_{-r}\circ S^{-1})(B))+\bar{\delta} +\epsilon/3
\end{multline}
Similarly, if  $(T_t\times R_t)(x,y)\in [A\times (R_{-r}\circ S^{-1})(B)]^c$, then
$$(T_t\times R_t)(x,y)\in \tilde{V}(r',S').$$
Indeed, by \eqref{eq:tilc}, the above follows by showing
$$
(R_{-r'}\circ S'^{-1})([V_{\epsilon'}(B^c)]^c)\subset (R_{-r}\circ S^{-1})(B)
$$
Which is a straightforward consequence of \eqref{eq:alu}.
 Therefore by \eqref{eq:bet3} and \eqref{eq:c001}, we have
\begin{multline}\label{eq:N2}
\frac{1}{L}\int_{M}^{M+L}\chi_{A\times (R_{-r}\circ S^{-1})(B)}((T_t\times R_t)(x,y))dt=\\ 1- \frac{1}{L}\int_{M}^{M+L}\chi_{[A\times (R_{-r}\circ S^{-1})(B)]^c}((T_t\times R_t)(x,y))dt\geq \\
 1- \frac{1}{L}\int_{M}^{M+L}\chi_{\tilde{V}(r',S')}((T_t\times R_t)(x,y))dt\geq\\
  1-\rho(\tilde{V}(r',s'))-\bar{\delta}=\rho(A\times (R_{-r'}\circ S'^{-1})(V_{\epsilon}(B^c)^c))-\bar{\delta}\\
  \geq \rho(A\times(R_{-r}\circ S^{-1})(B)) -\bar{\delta}-2\epsilon/3.
\end{multline}
Summarizing, \eqref{eq:N1} and \eqref{eq:N2} together imply that
$$
\left|\frac{1}{L}\int_{M}^{M+L}\chi_{A\times (R_{-r}\circ S^{-1})(B)}((T_t\times R_t)(x,y))dt-\rho(A\times(R_{-r}\circ S^{-1})(B))\right|<\epsilon
$$
and this finishes the proof.
\end{proof}

We will now prove Theorem \ref{thm:dis}

\begin{proof}[Proof of Theorem \ref{thm:dis}] Let $\rho\in J^e(T_t,R_t)$ and $\rho\neq \mu\otimes \nu$. Let $p>0$ be such that $T_t$ satisfies the $R(p)$ property (see Definition \ref{def:sr}) and fix $q=p$. Let ${\bf D}={\bf D}(q)\subset Aut(Y,\cC,\nu)$ be such that $R_t$ satisfies the $C({\bf D},q)$ property (the existence of $\bf{D}$ follows from the assumptions of Theorem \ref{thm:dis} and Definition \ref{def:cent}). Let $\{A_i\}_{i\geq 1}$ and $\{B_i\}_{i\geq 1}$ be dense families in $\cB$ and $\cC$ respectively such that $\mu(\partial A_i)=\nu(\partial B_i)=0$ for every $i\geq 1$ (see Remark \ref{rem:reg}). Let
 $${\bf E'}:=\left[\big(([-p,-p/2]\cup [p/2,p])\times {\bf D}\big)\cup ([-p,p]\times \bf \tilde{D})\right]$$
and let
$$
{\bf E}:=\{(r,S)\in {\bf E'}\;:\; S\circ R_r \text{ is ergodic }\}.
$$

We consider the following function $\Psi:{\bf E} \to \R_+$
$$
\Psi(t,S)=\sum_{i,j=1}^{\infty}\frac{1}{2^{i+j}}\left|\rho(A_i\times (R_{-t}\circ S^{-1})(B_j))-\rho(A_i\times B_j)\right|.
$$
Notice that $\Psi$ is a continuous function\footnote{We consider the strong operator topology on ${\bf D}\subset Aut(Y,\cC,\nu)$. Recall also that $\bf{D}$ is compact, hence closed.}. Recall that by the definition of {\bf E}, for every $(r,S)\in \bf{E}$, $\Psi(r,S)>0$. Indeed, if $\Psi(r,S)=0$ then $\rho(A_i\times (R_{-r}\circ S^{-1})(B_j))=\rho(A_i\times B_j)$ for every $i,j\geq 1$. But since $\{A_i\}_{i\geq 1}$ and $\{B_i\}_{i\geq 1}$ are dense, it follows that $\rho(A\times (R_{-r}\circ S^{-1})(B))=\rho(A\times B)$ for every $A\in \cB$ and $B\in \cC$. This however contradicts to the ergodicity of $(S\circ R_r)^{-1}$ and $\rho\neq\mu\otimes\nu$ (recall that every ergodic transformation is disjoint from $\operatorname{Id}$). By compactness of ${\bf E}$ it follows that there exists $\epsilon_0>0$ such that
$\inf_{(r,S)\in {\bf E}}\Psi(r,S)>\epsilon_0$. This in turn implies that there exists $H\in \N$ such that for every $(r,S)\in {\bf E}$,
$$
\sum_{i,j\geq 1}^H\frac{1}{2^{i+j}}\left|\rho(A_i\times (R_{-r} \circ S^{-1} )(B_j))-\rho(A_i\times B_j)\right|\geq \epsilon_0/2.
$$
Summarizing,
\begin{multline}\label{eq:dsj}
\text{for every } (r,S)\in  {\bf E} \text{ there exists } i,j\in \{1,\ldots ,H\}
\text{ such that} \\
|\rho(A_i\times (R_{-r} \circ S^{-1})(B_j))-\rho(A_i\times B_j)|\geq \epsilon_0.
\end{multline}
 By Lemma \ref{lem:count} for $B=B_j$, $j\in\{1,\ldots,H\}$ it follows that there exists $0<\epsilon_1<\epsilon_0/16$ such that
$$
\nu(V_{\epsilon_1}(B_j)\setminus B_j)<\epsilon_0/20  \text{ for } j\in\{1,\ldots, H\}.
$$
Moreover using Lemma \ref{lem:count} again, by taking a smaller $\epsilon_1$ if necessary, we can also assume that
$$
\nu(V_{\epsilon_1}(A_i)\setminus A_i)<\epsilon_0/20  \text{ for } j\in\{1,\ldots, H\}.
$$
Since $\rho$ is a joining, this implies that
\begin{equation}\label{eq:su}
|\rho(A_i\times B_j)-\rho(V_{\epsilon_1}(A_i)\times V_{\epsilon_1}(B_j))|<\epsilon_0/10.
\end{equation}

Let moreover $\widetilde{V}_{ij}:=\left[[V_{\epsilon_1}(A_i^c)]^c\times [V_{\epsilon_1}(B_j^c)]^c\right]^c$. By taking still smaller $\epsilon_1$ if necessary we can assume that for $i,j\in \{1,\ldots H\}$, we have (compare with \eqref{eq:nei2})
\be\label{eq:su2}
|\rho(\widetilde{V}_{ij})-(1-\rho(A_i\times B_j))|<\epsilon_0/10.
\ee
Indeed, it remains to notice that $\rho(\widetilde{V}_{ij})=1-\rho([V_{\epsilon_1}(A_i^c)]^c\times [V_{\epsilon_1}(B_j^c)]^c)$.

Let $\kappa=\kappa(\epsilon_1)=\min\{\kappa_1,\kappa_2\}$, where $\kappa_1>0$ comes from the strong R-property and $\kappa_2>0$ comes from the $C(q)$-property (both for $\epsilon_1$). By ergodic theorem for (finitely many) functions $\chi_{A_i\times B_j}$, $\chi_{V_{\epsilon_1}(A_i)\times V_{\epsilon_1}(B_j)}$ and $\chi_{\tilde{V}_{ij}}$, for $i,j\in\{1,\ldots H\}$ we get that there exists $N_1\in \R$ and $Z_1\in \cB\otimes \cC$ with $\rho(Z_1)\geq 1-\epsilon_0$ such that for every $L,M\geq N_1$, $\frac{L}{M}\geq \kappa$ and for every $(x,y)\in Z_1$, we have
\be\label{eq:bet5}
\left|\frac{1}{L}\int_{M}^{M+L}\chi_{C_i\times D_j}((T_t\times R_t)(x,y))dt-\rho(C_i\times D_j)\right|<\epsilon_1/10
\ee
for $C_i\in  \{A_i,V_{\epsilon_1}(A_i)\}$, $D_j\in  \{B_j,V_{\epsilon_1}(B_j)\}$  and $C_i$ and $D_j$ satisfying $C_i\times D_j=\widetilde{V}_{ij}$ (recall that $\widetilde{V}_{ij}$ is a product set) for $i,j\in\{1,\ldots, H\}$.

By Lemma \ref{lem:compact} for $A_i$ and $B_j$ with $i,j\in\{1,\ldots,H\}$ and for $\epsilon_1$, $\delta=\epsilon_1/10$ and $\kappa$, we get that there exists $N_2\in \R$ and $Z_2\in \cB\otimes \cC$ with $\rho(Z_2)\geq 1-\epsilon_0$ such that for every $L,M\geq N_2$, $\frac{L}{M}\geq \kappa$ and for every $(x,y)\in Z_2$, we have
\be\label{eq:bet6}
\left|\frac{1}{L}\int_{M}^{M+L}\chi_{A_i\times (R_{-r}\circ S^{-1})(B_i)}((T_t\times R_t)(x,y))-\rho(A_i\times(R_{-r}\circ S^{-1})(B_j))\right|<\epsilon_1/10
\ee
for every $(r,S)\in [-p,p]\times {\bf D}$.

Let $Z'=Z_1\cap Z_2\cap (Z\times Y)$, where $Z$ is the set from the strong R-property (for $\epsilon_1>0$ and $N=\max\{N_1,N_2,\kappa^{-2}\}$). Notice that since $\mu(Z)\geq 1-\epsilon_1\geq 1-\epsilon_0$, we have $\rho(Z')\geq 1-10\epsilon_0$. Moreover, since $\rho$ is regular and $X\times Y$ is $\sigma$-compact, we can assume additionally that $Z'$ is compact. Consider the projection $\pi:X\times Y\to Y$, $\pi(x,y)=y$. Since $Z'$ is compact it follows that the fibers of the map $\pi_{|Z'}:Z'\to \pi(Z')$ are also $\sigma$-compact and $\pi(Z')$ is compact. Therefore, by Kunugui's selection theorem, \cite{Kun}, it follows that there exists a measurable selection $x:\pi(Z')\to X$ such that $(x(y),y)\in Z'$. Applying Egorov theorem to the function $x$, it follows that there exists $Y'\in Y$, $\nu(Y')\geq 1-\epsilon$ and such that $x:\pi(Z')\cap Y'\to X$ is uniformly continuous. This means that for  $\delta=\min\{\delta_1(\epsilon_1,N),\delta_2(\epsilon,N)\}$(where $\delta_1$ comes from the strong R-property and $\delta_2$ comes from the $C(q)$-property) there exists $\delta'>0$ such that for $y,y'\in Y'':=\pi(Z')\cap Y'$ with $\nu(Y'')\geq 1-\epsilon_0$ and  $d_2(y,y')<\delta'$, $d_1(x(y),x(y'))\leq \delta$. Let
$$
\tilde{Z}=Z'\cap (X\times Y'').
$$
Then $\rho(\tilde{Z})\geq 1-15\epsilon_0$.  Let $\tilde{Z}_Y=\pi(\tilde{Z})$. Then $\nu(\tilde{Z}_Y)\geq 1-15\epsilon_0$.
Let $(A_k)_{k\in \N}$ be the sequence of automorphisms coming from the $C(q)$-property. There exists $k_0=k_0(\epsilon_1)\in \N$ such that for $k\geq k_0$, we have
$$
\nu(A_{-k}(\tilde{Z}_Y)\cap \tilde{Z}_Y)>0.
$$
Therefore, there exists $(x,y)\in Z'$ and $(x',y'=A_ky)\in Z'$ such that $d_1(x,x')<\delta$ and $d_2(y,y')<\delta$. This, by the strong R-property implies that there exists $M_1=M_1(x,x')$ such that {\bf R1} holds for $x,x'$ and similarly there exists $M_2=M_2(y,y')$ such that {\bf C2.} holds for $y,y'$. Let $\tilde{M}=\min\{M_1,M_2\}\geq N$. For simplicity, denote $p_0=p_{\tilde{M}}$ (coming from {\bf R1}) and $S=S_{\tilde{M}}$ (coming from {\bf C1}). Recall that by the definition of $N$, we have $\tilde{M}\geq \kappa^{-2}=\max\{\kappa_1^{-2},\kappa_2^{-2}\}$. Since $\tilde{M}$ is the minimum of $M$ and $M'$ by {\bf R1} and {\bf C2} it follows that
\be\label{eq:ee}
\begin{aligned}
&\text{ either } p\geq |p_0|\geq p/2, S\in  {\bf D}\text{ and } (p_0,S)\in {\bf E};\\
&\text{  or  } \;\;p_0\in[-p,p], S\in {\bf \tilde{D}}\text{ and } (p_0,S)\in {\bf E}.
\end{aligned}
\ee

Notice that in the interval $I=[\tilde{M},\tilde{M}+\kappa\tilde{M}]$ by {\bf R1}, there exists $I_1\subset I$ with $\lambda(I_1)\geq (1-\epsilon_1)\kappa\tilde{M}$ such that
\be\label{eq:flowclose}
d_1(T_tx,T_{t+p_{0}}x')<\epsilon_1 \text{ for } t\in I_1.
\ee
Moreover by {\bf C1}, there exists $I_2\subset I$ with $\lambda(I_2)\geq (1-\epsilon_1)\kappa\tilde{M}$ such that
\be\label{eq:centclose}
d_2(R_ty,SR_ty')<\epsilon_1 \text{ for } t\in I_2.
\ee

 By \eqref{eq:flowclose}, for $t\in I_1\cap I_2$  if $(T_{t+p_0}x',R_{t+p_0}y')\in A_i\times R_{p_0}S^{-1}B_j$, we have
\be\label{eq:juk}
(T_tx, R_ty)\in V_{\epsilon_1}(A_i)\times V_{\epsilon_1}(B_j).
\ee
Moreover, if  $t\in I_1\cap I_2$,  $(T_{t+p_0}x',R_{t+p_0}y')\in [A_i\times R_{p_0}S^{-1}B_j]^c$, then
\be\label{eq:juk2}
(T_tx, R_ty)\in \left[[V_{\epsilon_1}(A_i^c)]^c\times [V_{\epsilon_1}(B_j^c)]^c\right]^c.
\ee
By \eqref{eq:bet6} (for $r=-p_0$ and $S$), \eqref{eq:juk} and \eqref{eq:bet5} (for $C_i=V_{\epsilon_1}(A_i)$ and $D_j=V_{\epsilon_1}(B_j)$), we have (for $L=\kappa \tilde{M}$)
\begin{multline}\label{mult0}
\rho(A_i\times R_{p_0}S^{-1}B_j)\leq \frac{1}{L}\int_{\tilde{M}}^{\tilde{M}+L}\chi_{A_i\times (R_{p_0}\circ S^{-1})(B_j)}((T_t\times R_t)(x',y'))dt+\epsilon_1/10\leq\\
\frac{1}{L}\int_{\tilde{M}}^{\tilde{M}+L}\chi_{A_i\times (R_{p_0}\circ S^{-1})(B_j)}((T_{t+p_0}\times R_{t+p_0})(x',y'))dt+\frac{2|p_0|}{L}+\epsilon_1/10\leq \\
\frac{|I_1^c\cup I_2^c|}{L}+\frac{2|p_0|}{L}+\epsilon_1/10+\frac{1}{L}\int_{\tilde{M}}^{\tilde{M}+L}\chi_{V_{\epsilon_1}(A_i)\times V_{\epsilon_1}(B_j)}((T_t\times R_t)(x,y))dt\\
 \leq 3\epsilon_1+\epsilon_1/5+\rho(V_{\epsilon_1}(A_i)\times V_{\epsilon_1}(B_j))\leq 4\epsilon_1+\epsilon_0/10 +\rho(A_i\times B_j),
 \end{multline}
 where the last inequality comes from \eqref{eq:su} and we also used $\frac{2|p_0|}{L}\leq \frac{2|p_0|}{\kappa \tilde{M}}\leq 2\kappa |p_0| \leq \epsilon_1/10$, which follows by $\tilde{M}\geq\kappa^{-2}$ and by taking smaller $\kappa$ if necessary.

Recall that $\widetilde{V}_{ij}=\left[[V_{\epsilon_1}(A_i^c)]^c\times [V_{\epsilon_1}(B_j^c)]^c\right]^c$. Analogously, by \eqref{eq:bet5}, \eqref{eq:bet6}, \eqref{eq:juk2} and $\tilde{M}\geq\kappa^{-2}$, we have
\begin{multline}\label{mult1}
\rho(A_i\times R_{p_0}S^{-1}B_j)\geq \frac{1}{L}\int_{\tilde{M}}^{\tilde{M}+L}\chi_{A_i\times (R_{p_0}\circ S^{-1})(B_j)}((T_t\times R_t)(x',y'))dt-\epsilon_1/10=\\
1-\epsilon_1/10-\frac{1}{L}\int_{\tilde{M}}^{\tilde{M}+L}\chi_{[A_i\times (R_{p_0}\circ S^{-1})(B_j)]^c}((T_t\times R_t)(x',y'))dt\geq \\
1-\frac{2|p_0|}{L}-\frac{|I_1^c\cup I_2^c|}{L}-\epsilon_1/10-\frac{1}{L}\int_{\tilde{M}}^{\tilde{M}+L}\chi_{\widetilde{V}_{ij}}((T_t\times R_t)(x,y))dt\geq\\
1-4\epsilon_1- \rho(\widetilde{V}_{ij})\geq \rho(A_i\times B_j)-4\epsilon_1-\epsilon_0/10,
 \end{multline}
 the last inequality by \eqref{eq:su2}.
Consequently, \eqref{mult0} and \eqref{mult1} imply that for every $i,j\in\{1,\ldots H\}$,
$$
|\rho(A_i\times (R_{p_0}\circ S^{-1})(B_j))-\rho(A_i\times B_j)|<\epsilon_0/3.
$$
This together with \eqref{eq:ee} contradicts \eqref{eq:dsj} (since $\epsilon_1<\epsilon_0/16$ and recall that $r=-p_0$).
\end{proof}

\section{Disjointness of unipotent flows and flows with 
strong R-property}
In this section we will prove that flows with strong R-property are disjoint from unipotent flows $u_t$ generated by $U$ acting on $(G\slash \Gamma,\mu)$ where $\Gamma$ is irreducible, $\mu$ is the Haar measure and $\operatorname{GR}(U)>3$ (see \eqref{eq:GR}). This result, by Theorem
\ref{thm:dis}, is an immediate consequence of the following:

\begin{proposition}\label{pro:cent} Let $u_t$ be a unipotent flow generated by $U\in \mathfrak{g}$ on $G\slash \Gamma$ where $\Gamma\subset G$ is an irreducible compact lattice. If $\operatorname{GR}(U)>3$, then $u_t$ satisfies the $C(q)$-property for every $q>0$.
\end{proposition}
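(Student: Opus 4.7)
The plan is to use the chain basis of Lemma~\ref{lem:chainbasis} to construct a concrete perturbation sequence $A_k$ and a compact family $\mathbf D$ of centralizer translations witnessing the $C(q)$-property. The hypothesis $\operatorname{GR}(U)>3$ delivers a chain $\{X_0^n,\ldots,X_{m_n}^n\}$ whose top element $X_0^n\in\mathfrak z_{\mathfrak g}(U)$ is linearly independent from $U$ itself: the Jacobson--Morozov $\mathfrak{sl}(2)$-triple containing $U$ contributes exactly $\operatorname{GR}=3$, so the strict inequality forces either a longer $\mathfrak{sl}(2)$-chain or an additional non-trivial chain, in either case producing such an $X_0^n$. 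This linear independence is precisely what distinguishes $S_L\circ R_r$ from a mere time shift of $u_t$, and is therefore crucial for the required ergodicity.

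Fix $X:=X_{m_n}^n$ and set $A_k:=L_{\exp(s_k X)}$ for some $s_k\searrow 0$. Right-invariance of $d_{G/\Gamma}$ yields $A_k\to\operatorname{Id}$ uniformly. For $y'=A_ky$ the identity $u_ty'=L_{\exp(s_k\operatorname{Ad}(u_t)X)}u_ty$ combined with the chain relation $\operatorname{ad}_U^j(X_{m_n}^n)=X_{m_n-j}^n$ gives
$$s_k\operatorname{Ad}(u_t)X=\frac{s_kt^{m_n}}{m_n!}X_0^n+W(t),\qquad W(t)=O(s_kt^{m_n-1}),$$
where the leading term lies in $\mathfrak z_{\mathfrak g}(U)$ and the remainder is transverse. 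Define the critical time $M_2=M_2(k)$ by $\tfrac{s_kM_2^{m_n}}{m_n!}=c_0$ for a fixed constant $c_0>0$, so that $M_2\to\infty$. For any $L\in[\kappa^{-2},M_2]$ the leading centralizer piece $Z_L:=\tfrac{s_kL^{m_n}}{m_n!}X_0^n$ has norm at most $c_0\|X_0^n\|$ and varies by $O(\kappa)$ over $[L,L+\kappa L]$, while $\|W(t)\|=O(s_kL^{m_n-1})=O(M_2^{-1})$ is negligible.

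Set $\mathbf D:=\{L_{\exp(Z)}:Z\in\mathfrak z_{\mathfrak g}(U),\ \|Z\|\leq c_0\|X_0^n\|\}$, which is compact in the uniform topology by compactness of the closed ball in $\mathfrak z_{\mathfrak g}(U)$, continuity of $\exp$, and uniform continuity of left translations under right-invariance. Put $S_L:=L_{\exp(Z_L)}\in\mathbf D$. BCH separation of centralizer and transverse parts then yields $d(u_ty',S_Lu_ty)=O(\kappa+M_2^{-1})<\epsilon$ uniformly for $t\in[L,L+\kappa L]$, establishing the distance bound in C1. Set $\tilde{\mathbf D}:=\mathbf D\cap\{L_{\exp(Z)}:\|Z\|\geq c_0\|X_0^n\|/2\}\subset\mathbf D\setminus\{\operatorname{Id}\}$, so that $S_{M_2}\in\tilde{\mathbf D}$.

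The remaining content is the ergodicity clauses of C1 and C2. Since $Z_L$ commutes with $U$, $S_L\circ R_r=L_{\exp(Z_L+rU)}$, a unipotent translation (the exponent being a sum of commuting nilpotents). By Ratner's equidistribution theorem the single translation inherits ergodicity from its generating one-parameter subgroup; by Moore's ergodicity theorem, together with the irreducibility of $\Gamma$ and the linear independence $X_0^n\notin\R U$, the one-parameter subgroup $\{\exp(\sigma(Z_L+rU))\}_{\sigma\in\R}$ acts ergodically on $G/\Gamma$ for every $L\in[\kappa^{-2},M_2]$ and every $r$ in the prescribed ranges, possibly after a harmless adjustment of $c_0$ to evade isolated algebraic obstructions relevant to the C2 requirement at $r=0$. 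This uniform ergodicity across a continuous two-parameter family of unipotent elements is the principal technical difficulty; the decisive input throughout is $\operatorname{GR}(U)>3$, without which (the horocycle-in-product case) $S_L\circ R_r$ would degenerate into a time shift $R_{r+s}$ and ergodicity would collapse at $r=-s\in[-q,q]$.
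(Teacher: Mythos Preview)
Your architecture is correct and matches the paper's: perturb along a chain direction, observe that the drift lands in the centralizer direction $X_0^n$, set $\mathbf D$ to be translations along that direction, and invoke Moore for ergodicity. There is, however, a needless complication in your choice of perturbation. The paper perturbs along $X_1^n$ (one step above the bottom of the chain) rather than the top element $X_{m_n}^n$. This yields the \emph{exact} identity $\operatorname{Ad}(\exp(tU))(\tfrac1k X_1^n)=\tfrac1k X_1^n+\tfrac{t}{k}X_0^n$, so there is no remainder $W(t)$ to estimate, no BCH separation to perform, and the distance bound in \textbf{C1} is a one-line computation; $\mathbf D$ becomes the one-parameter family $\{\psi_{t_0}:|t_0|\le 2\}$ with $\tilde{\mathbf D}=\{\psi_{\pm1}\}$. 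Your higher-degree choice works but buys nothing.

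The real gap is the ergodicity argument. You assert that $S_L\circ R_r=L_{\exp(Z_L+rU)}$ is ``a unipotent translation (the exponent being a sum of commuting nilpotents)'', but you never prove that $X_0^n$ is ad-nilpotent, and an arbitrary element of $\mathfrak z_{\mathfrak g}(U)$ need not be. The paper isolates this as a separate lemma (Lemma~\ref{lm:ergo}): from the Jacobson--Morozov triple $\{U,V,X\}$ and standard $\mathfrak{sl}_2$-representation theory one gets $[X,X_0^n]=\lambda X_0^n$ with $\lambda>0$ (this uses that the chain has depth $\ge 2$), so the direction $t_0X_0^n+s_0U$ lies in the unstable bundle of the flow $g_t=\exp(tX)$. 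Non-compactness of $H_{\mathbb R}=\{\exp(\sigma(t_0X_0^n+s_0U))\}$ is then proved dynamically: were $H_{\mathbb R}$ compact, its orbits would be compact submanifolds inside unstable leaves, and recurrence of $g_t$ combined with contraction under $g_{-t}$ would force them to be points. Only then does Moore apply. Your appeal to Ratner's equidistribution is unnecessary here, and the hedge ``possibly after a harmless adjustment of $c_0$ to evade isolated algebraic obstructions'' is misplaced: once $X_0^n\notin\mathbb R U$ is known, $Z_L+rU\neq0$ holds for \emph{all} relevant $L$ and $r$, and no adjustment is needed.
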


\begin{remark}It follows by Lemma 3.9. in \cite{KVW2} that the only unipotent flows that satisfy $\operatorname{GR}(U)\leq 3$ are of the form $h_t\times\operatorname{id}$ on $(\operatorname{SL}(2,\R)\times G')/\Gamma$ where $G'$ is a subgroup of semisimple Lie group $G$, $\Gamma $ is an irreducible lattice in $\operatorname{SL}(2,\R)\times G'$ and $h_t$ is the classical horocycle flow.
\end{remark}

In order to prove Proposition \ref{pro:cent}, we need the following lemma.
\begin{lemma}\label{lm:ergo}
Let $X_n\mapsto X_{n-1}\mapsto \cdots\mapsto X_1\mapsto X_0$ be a chain of depth at least 2, and $\psi_t$ be the flow generated
by $X_0$ on $G/\Gamma$. Then for any $s_0,t_0\in \R$ with $t_0^2+s_0^2>0$, the automorphism $\psi_{t_0}u_{s_0}$ is ergodic.
\end{lemma}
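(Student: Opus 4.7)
The plan is to reduce the ergodicity of $\psi_{t_0}\circ u_{s_0}$ to the ergodicity of a non-trivial unipotent one-parameter subgroup acting on $G/\Gamma$, and then invoke Moore's theorem. Since $X_0$ lies in the centralizer of $U$, the elements $\exp(t_0 X_0)$ and $\exp(s_0 U)$ commute, so
\[
\psi_{t_0}\circ u_{s_0}(g\Gamma)\;=\;\exp(Y)\,g\Gamma,\qquad Y\;:=\;s_0 U + t_0 X_0,
\]
and it suffices to show that left multiplication by $\exp(Y)$ is ergodic on $G/\Gamma$.

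The key assertion is that $Y$ is a non-zero \emph{nilpotent} element of $\mathfrak{g}$. Extend $U$ to a Jacobson--Morozov $\mathfrak{sl}_2$-triple $\{U,H,V\}$ in $\mathfrak{g}$ and decompose $\mathfrak{g}$ into $\mathrm{ad}_H$-weight spaces. The depth-at-least-$2$ hypothesis gives $X_0 = \mathrm{ad}_U^n(X_n) \in \ker(\mathrm{ad}_U)\cap \mathrm{Im}(\mathrm{ad}_U^n)$ with $n\geq 2$, so in the weight decomposition $X_0$ is a sum of highest-weight vectors of irreducible $\mathfrak{sl}_2$-subrepresentations of dimension at least $n+1\geq 3$, each carrying a strictly positive $\mathrm{ad}_H$-weight. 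Since $\mathrm{ad}_{X_0}$ shifts $\mathrm{ad}_H$-weights strictly upward on each summand and $\mathfrak{g}$ has only finitely many weights, $(\mathrm{ad}_{X_0})^k=0$ for $k$ large enough; that is, $X_0$ is ad-nilpotent. Combined with the nilpotency of $U$ and $[U,X_0]=0$, this forces $\mathrm{ad}_Y = s_0\,\mathrm{ad}_U + t_0\,\mathrm{ad}_{X_0}$ to be a sum of two commuting nilpotent operators, hence nilpotent; equivalently, $Y$ is nilpotent. Moreover $X_0$ is linearly independent from $U$ (distinct elements of the chain basis), so $s_0^2+t_0^2>0$ forces $Y\neq 0$.

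Thus $\{\exp(rY):r\in\R\}$ is a non-trivial unipotent, and therefore closed and non-compact, one-parameter subgroup of $G$. Because $\Gamma$ is an irreducible lattice in the semisimple Lie group $G$, Moore's ergodicity theorem together with the Howe--Moore mixing theorem imply that every non-identity element of this subgroup acts ergodically on $G/\Gamma$. In particular the time-$1$ element $\exp(Y)=\psi_{t_0}u_{s_0}$ is ergodic, as claimed.

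The principal obstacle is the nilpotency of $X_0$: merely lying in the centralizer of $U$ is not enough, since semisimple elements of $\mathfrak{g}$ can also commute with $U$. What is genuinely needed is the stronger constraint $X_0\in\mathrm{Im}(\mathrm{ad}_U^n)$ with $n\geq 2$, which is supplied by the depth-$\geq 2$ chain hypothesis and then fed into the Jacobson--Morozov $\mathfrak{sl}_2$-weight argument above. Once $X_0$ is known to be nilpotent, $Y$ is automatically nilpotent and the conclusion reduces to a standard application of Moore--Howe--Moore mixing for unipotent flows on irreducible quotients.
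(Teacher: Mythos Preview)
Your proof is correct and takes a somewhat more algebraic route than the paper's. Both arguments set $Y=s_0U+t_0X_0$, use $[U,X_0]=0$ to write $\psi_{t_0}u_{s_0}=\exp(Y)$, and reduce to Moore's ergodicity theorem once the cyclic group generated by $\exp(Y)$ is known to be non-precompact. The difference is in how non-compactness is obtained. The paper argues dynamically: it quotes $[X,X_0]=\lambda X_0$ with $\lambda>0$, so $Y$ lies in the unstable bundle of the flow $g_t$ generated by $X$; if $H_\R=\{\exp(rY)\}$ were compact, the orbits $H_x$ would form a continuous $g_t$-equivariant family of compact leaves inside the unstable foliation, and choosing a backward-recurrent point for $g_t$ contracts $H_x$ to a point, a contradiction. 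You instead extract the same positive-weight information directly from the chain hypothesis $X_0\in\ker(\mathrm{ad}_U)\cap\mathrm{Im}(\mathrm{ad}_U^n)$ with $n\ge 2$, conclude that $X_0$ (and hence $Y$, as a sum of commuting ad-nilpotents) is nilpotent, and observe that a non-trivial unipotent one-parameter subgroup is automatically unbounded in any faithful linear realization of $G$. Your route is shorter and avoids the dynamical recurrence argument; the paper's has the virtue of making the contraction mechanism in $W^u$ explicit. One shared caveat: the implication ``$s_0^2+t_0^2>0\Rightarrow Y\neq 0$'' requires $X_0$ and $U$ to be linearly independent, which fails exactly when the chain is the $\mathfrak{sl}_2$-triple chain $V\mapsto X\mapsto U$; since the paper only ever invokes the lemma with $X_0\neq U$, this is harmless in context, but worth noting.
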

\begin{proof}
Let $\{U,V,X\}$ be an $\mathfrak{sl}(2,\mathbb R)$ triple. Let $g_t$ be the flow generated by $X$ on $G/\Gamma$. Since
$[X, U]=2U$, the vector field generated by $U$ lies in the unstable bundle of $g_t$ for $t>0$, hence $g_t$ is a partially hyperbolic flow. Moreover (see e.g.\ \cite{Kri} Theorem $5.6$) it follows that $[X, X_0]=\lambda X_0$ for some $\lambda >0$. Therefore, $X_0$ is also a $g_t$ invariant vector field belonging to the unstable bundle of $g_t$.

By definition of the chain basis (and since $\psi_t$ is generated by $X_0$) it follows that the automorphisms $\psi_{t_0}$ and $u_{s_0}$ commute. Let $H_\Delta:=\{\exp(\ell[t_0X_0+s_0U])\;:\; \ell\in \Delta\}$ for $\Delta\in\{\Z,\R\}$. It follows that $H_\R$ is a proper subgroup of $G$, and $H_\Z$ is a discrete subgroup. So by Moore's ergodicity theorem, to prove ergodicity of $\psi_{t_0}u_{s_0}$ it is enough to show that the subgroup $H_\Z$ is not compact.

Since $H_\R\subset H_\Z\cdot\{\exp(\ell[t_0X_0+s_0U]): \ell\in [0,1]\}$, it suffices to prove $H_\R$ is not compact. To that end, we will argue by contradiction. Assume that $H_\R$ is compact, then for any $x\in G/\Gamma$, $H_x:=\{h\cdot x:h\in H_\R\}$ is an immersed compact submanifold inside the unstable manifold of $g_t$ in $G/\Gamma$. Moreover, $H_x$ depends continuously on $x$, and the family $\{H_x\}_{x\in G/\Gamma}$ is a $g_t$ invariant foliation (with compact leaves), and $g_t(H_x)=H_{g_t(x)}$ for any $x\in G/\Gamma$ and $t\in \R$. Since $g_t$ is ergodic, almost every point is recurrent, then pick a point $x$ such that, there exists a decreasing sequence $\{t_i:i\in\N, t_i\to -\infty\text{ as }i\to\infty\}$ with $g_{t_i}(x)\to x$ as $i\to\infty$. By continuity of $H_x$ on $x$, we have $g_{t_i}(H_x)=H_{g_{t_i}(x)}\to H_x$ in the Hausdorff topology as $i\to\infty$. Since $H_x$ lies in the unstable submanifold and by the choice of $x$ and $t_i$, $g_{t_i}(H_x)\to \{x\}$ as $t_i\to -\infty$. This implies that  $H_x=\{x\}$. Moreover this equality holds for almost every $x$ (by the choice of $x$), hence by continuity again, $H_x=\{x\}$ for any $x$. It follows that $H_{\R}=\operatorname{Id}$, a contradiction to the fact that $X_0\neq 0$ and $t_0^2+s_0^2\neq 0$. Therefore our proof is complete.
\end{proof}

\begin{proof}[Proof of Proposition \ref{pro:cent}]
Fix $q>0$. Notice that by the definition of $\operatorname{GR}(U)$ it follows that chains of length $1$ (i.e.\ trivial chains) do not contribute to the number $\operatorname{GR}(U)$ (see Lemma \ref{lem:chainbasis}). Moreover the chain coming from the $\mathfrak{sl}_2$ triple, i.e.\ $V\mapsto X\mapsto U$ contributes $3$ to the number $\operatorname{GR}(U)$. Since, by assumptions, $\operatorname{GR}(U)>3$ it follows that there exists a chain $X_n\mapsto X_{n-1}\mapsto \cdots\mapsto X_1\mapsto X_0$ of length at least $2$ different from the chain $V\mapsto X\mapsto U$, i.e.\ $U\neq X_0$. Let $\psi_t$ be the flow on $G/\Gamma$ generated by $X_0$; and let $\varphi_t$ be the flow generated by $X_1$. Notice that for every $t_0\in [-2,2]$ the automorphism $\psi_{t_0}$ is uniformly continuous on $G/\Gamma$ (for the right invariant metric $d$). Set
$$
{\bf D}:=\{\psi_{t_0}: |t_0|\leq 2 \}\subset Aut(G/\Gamma,\mu)\cap UC(G/\Gamma,d),
$$
and let ${\bf \tilde{D}}:=\{\psi_1,\psi_{-1}\}\subset {\bf D}\setminus \{Id\}$. Notice that ${\bf \tilde{D}}$ is compact in the uniform topology. We now define Let
\be\label{eq:ak}
A_k(x)=\varphi_{1/k}(x) \text{ for every } x\in G/\Gamma.
\ee
It follows that $A_k\to\operatorname{Id}$ uniformly as $k\to +\infty$.
Fix $\epsilon\in(0,1)$ and $N\in \N$ and let $\kappa:=\epsilon^2$ and $\delta=\frac{\epsilon^4}{10N}$. Notice that by Lemma \ref{lm:ergo} it follows that for every $t_0,s_0\in \R$, with $t_0^2+s_0^2>0$, the automorphism $\psi_{t_0}u_{s_0}$ is ergodic. This, together with the definition of ${\bf \tilde{D}}$, give the ergodicity of the automorphisms in $\bf{C1}$ and $\bf{C2}$ in the definition of $C(\mathbf{D},q)$-property.

For $y\in G/\Gamma$, let $y'=A_k(y)$. By Lemma \ref{lem:chainbasis}, we get:
\be\label{eq:som}
\begin{aligned}
\exp(tU)\exp\left(\frac{1}{k}X_1\right)\exp(-tU)&=\exp\left(\operatorname{Ad}_{\exp(tU)}\left(\frac{1}{k}X_1\right)\right)\\
&=\exp\left(\exp(\operatorname{ad}_{tU})(\frac{1}{k}X_1)\right)\\
&=\exp\left((\operatorname{Id}+\operatorname{ad}_{tU})(\frac{1}{k}X_1)\right)\\
&=\exp\left(\frac{1}{k}X_1+\frac{t}{k} X_0\right).
\end{aligned}
\ee
 Let $M:=k$. Then we have
$$M=k\geq\frac{1}{\delta}=\frac{5N}{\epsilon^4}>\frac{1}{\epsilon^4}=\kappa^{-2}.$$
Since $\epsilon\in(0,1)$, the above inequality also implies that $M\geq N$.
Take $M''\in[\kappa^{-2},M]$. By the definition of $M$ it follows that
$$0<\frac{M''}{k}\leq 1.$$
Let $S_{M''}:=\psi_{-\frac{M''}{k}}$, then by the above bound we have that $S_{M''}\in {\bf D}$ and moreover $S_M=\psi_{-1}\in \mathbf{\tilde{D}}$. Take $t\in [M'',M''+\kappa M'']$. By \eqref{eq:som}, \eqref{eq:ak} and since $[U,X_0]=0$, we have
\begin{equation}
\begin{aligned}
d(u_t(y),\psi_{-\frac{M''}{k}}u_t(y'))&\leq d_G\left(\exp(tU)y,\exp(-\frac{M''}{k}X_0)\exp(tU)\exp(\frac{1}{k}X_1)y\right)\\
&=d_G(e,\exp(-\frac{M''}{k}X_0)\exp(tU)\exp\left(\frac{1}{k}X_1\right)\exp(-tU))\\
&=d_G(e,\exp(-\frac{M''}{k}X_0)\exp(\frac{1}{k}X_1+\frac{t}{k}X_0))<\epsilon,
\end{aligned}
\end{equation}
where the last inequality follows from $\frac{t-M''}{k}\leq \kappa \frac{M''}{k}\leq \kappa$.
Therefore for every $t\in [M'',M''+\kappa M'']$, we have that $d(u_t(y),S_{M''}u_t(y'))<\epsilon$ and hence in particular that the measure estimate in {\bf C1} holds. This finishes the proof.
\end{proof}





Proposition \ref{pro:cent} motivates the following remark:
\begin{remark}In a recent paper, \cite{KVW2}, the authors showed that if $u_t$ is a unipotent flow on $G\slash \Gamma$ with $\operatorname{GR}(U)>3$, then $u_t$ is not loosely Kronecker (without the irreducibility assumption on the lattice). From \cite{Rat1} it follows that every factor $v_t$ of $u_t$ is also unipotent on $G'\slash \Gamma'$. Let $V\in Lie (G')$ be the generator of $v_t$. If $\Gamma$ is additionally irreducible, then
$\operatorname{GR}(V)>3$ (there are no $\operatorname{SL}(2,\R)$ factors). In this case, by \cite{KVW2} it follows that every factor of $u_t$ is not loosely Kronecker. Up to now, all flows  satisfying the strong R-property (see Section \ref{sec:rp}) are loosely Kronecker. The above reasoning shows that any flow as in Section \ref{sec:rp} does not have a common factor with $u_t=\exp(tU)$, where $\operatorname{GR}(U)>3$ (and $\Gamma$ is irreducible). However, as shown in \cite{Rud}, disjointness (which we prove in this setting) is strictly stronger than not having a common factor.
\end{remark}
 The following natural questions arise:
\begin{itemize}
\item \textbf{Question 1:} Does strong R-property imply loosely Kronecker property?
\item \textbf{Question 2:} Is there any other example beyond unipotent flows satisfying $C(q)$-property? In particular, does it hold for time changes of unipotent flows?
\end{itemize}
\begin{remark}
Question 2 is interesting since the main mechanism in obtaining R-property for time changes of horocycle flows is controlling deviations of ergodic averages of the time-changed function (see eg. \cite{FlaminioForni}). Hence establishing the $C(q)$-property for time changes of general unipotent flows would probably require estimates on  ergodic averages of unipotent flows, which is currently not known. This is the main reason we only prove $C(q)$-property for classical unipotent flows (and not their time-changes).
\end{remark}

\section{Flows with strong R-property}\label{sec:rp}
In this section we will prove strong R-property for certain parabolic flows.
We will focus on three classes of flows: horocycle flows  in constant curvature case and their smooth time changes, horocycle flows in variable curvature and (non-trivial) time changes of bounded type Heisenberg nilflows. A version of Ratner's property was established for other classes of parabolic flows (see \cite{Fel-Orn}, \cite{Fr-Lem}, \cite{Fr-Lem2}, \cite{AK2}, \cite{FK}, \cite{KK},  \cite{KLU}, \cite{ForK}). Moreover, one can construct {\em rank one} systems satisfying Ratner's property (the Chacon transformation being a classical example).
It seems that in fact all systems considered in the above quoted papers  satisfy the strong R-property. We focus here only on the three classes mentioned above. We will prove the strong R-property for each class in a separate subsection.

\subsection{Horocycle flows and their smooth time changes}
We will first consider time changes of horocycle flows acting on unit tangent bundle of a surface of constant negative curvature  (including horocycle flows by taking trivial time changes). Next, we will consider horocycle flows on unit tangent bundle of a surface of {\em variable} negative curvature.
\bigskip

\subsubsection{Time changes of horocycle flows over compact surface of constant curvature}\label{sec.tco}
Let $h_t$ denote the horocycle flow on $\operatorname{SL}(2,\R)/\Gamma$ with $\Gamma\subset\operatorname{SL}(2,\R)$ is a cocompact lattice and, for $\tau\in C^1(\operatorname{SL}(2,\R)/\Gamma)$, $\tau>0$, let $h_t^\tau$ denote the time change of $h_t$ given by $\tau$. We will show that strong R-property holds for $h_t^{\tau}$. Recall that the map $\exp:\mathfrak{sl}(2,\R)\to \operatorname{SL}(2,\R)$ is a smooth bijection close to $0\in \mathfrak{sl}(2,\R)$. It follows that for two points $x,y$ which are sufficiently close, we can always write $y=\exp(aU)\exp(bX)\exp(cV)x$ for some (small) $a,b,c$. We will use this decomposition in the following results. We will start with following result, which is an immediate consequence of Proposition 4.1. and Remark 4.2. in \cite{KLU}:
\begin{lemma}\label{lem:dsf}
For every $D\geq1$ and every $\epsilon\in(0,D^{-3})$,  there exists $N_{\epsilon}>0$ and $\delta'>0$ such that for every $x$ and every $y=\exp(aU)\exp(bX)\exp(cV)x$, with $|a|,|b|,|c|\leq \delta'$, we have
$$
d(h^\tau_tx,h^\tau_{\chi_{x,y}(t)}y) \leq  \frac{\epsilon}{2},\;\; \text{where } \chi_{x,y}(t)=e^{-2b}t-e^{-3b}ct^{2},
$$
for every $t\in \R$ satisfying $N_\epsilon\leq |t|\leq D\cdot\min\{|b|^{-1},|c|^{-1/2}\}$.
\end{lemma}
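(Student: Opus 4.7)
The plan is to reduce the statement to explicit computations in $\operatorname{SL}(2,\R)$ for the untime-changed horocycle flow, and then pass to the time change $h_t^\tau$ using uniform continuity of the cocycle $\alpha = \alpha_\tau$ defined by \eqref{eq:time-change}. Since the lemma is stated as an immediate consequence of [KLU, Proposition 4.1 and Remark 4.2], the task is to recast their estimate in the asymmetric form $d(h_t^\tau x, h_{\chi_{x,y}(t)}^\tau y) \le \epsilon/2$ with the precise polynomial $\chi_{x,y}(t) = e^{-2b}t - e^{-3b}c t^2$.

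First I would handle the case $\tau\equiv 1$, i.e.\ the classical horocycle flow $h_t$. Writing $y = e^{aU}e^{bX}e^{cV}x$ with $|a|,|b|,|c|\le \delta'$ and using the standard $\mathfrak{sl}(2,\R)$ relations $\operatorname{Ad}_{e^{bX}}(U) = e^{2b}U$ together with the $\operatorname{SL}(2,\R)$ identity
\[
e^{sU}e^{cV} \;=\; e^{c(1+sc)^{-1}V}\,e^{\log(1+sc)\,X}\,e^{s(1+sc)^{-1}U},\qquad |sc|<1,
\]
I would push $e^{tU}$ through $e^{aU}e^{bX}e^{cV}$ to obtain
\[
h_t(y) \;=\; e^{bX}\, e^{c''V}\, e^{b''X}\, h_{s'}(x),
\]
where $s' = s/(1+sc)$ with $s=(t+a)e^{-2b}$, and $b'', c''$ are explicit rational functions of $s,b,c$ that are $O(|b|+|c|t)$ in the allowed range. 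Taking $s'$ as the candidate for $\chi_{x,y}(t)$ and Taylor expanding in $a,b,c$ and in the small quantity $ct$ yields $\chi_{x,y}(t) = e^{-2b}t - e^{-3b}ct^2 + O(|a| + t^3c^2)$; the constraint $|t|\le D\min\{|b|^{-1},|c|^{-1/2}\}$ guarantees that the neglected terms and the sizes of $b''$, $c''$ are controlled by $D^3\delta'$, hence by $\epsilon/4$ for $\delta'$ small.

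Next I would transfer the estimate to $h_t^\tau$. By definition $h_t^\tau x = h_{\alpha(x,t)}x$, so
\[
d(h_t^\tau x, h_{\chi_{x,y}(t)}^\tau y) \;=\; d\bigl(h_{\alpha(x,t)}x,\, h_{\alpha(y,\chi_{x,y}(t))}y\bigr).
\]
Using the previous step for the untime-changed flow at parameter $\alpha(x,t)$ in place of $t$, it suffices to show that $|\alpha(y,\chi_{x,y}(t)) - \chi_{x,y}(\alpha(x,t))|$ is uniformly small. This is exactly the type of cocycle comparison done in [KLU, Prop. 4.1]: invert \eqref{eq:time-change}, apply the mean value theorem to $\tau$, and combine with the already-proved closeness of the two orbit segments on most of $[0,\alpha(x,t)]$. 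Choosing $N_\epsilon$ large enough to absorb a bounded transient and $\delta'$ small enough to compensate the Lipschitz constant of $\tau\in C^1$, both terms fall below $\epsilon/4$.

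The main technical difficulty is the coupling between the quadratic divergence window and the time-change cocycle: the admissible range $|t|\le D\min\{|b|^{-1},|c|^{-1/2}\}$ is the maximal window on which the $SL(2,\R)$ computation produces $o(1)$ drift, so no slack is available when comparing $\alpha(y,\cdot)$ with $\alpha(x,\cdot)$. The resolution, following [KLU, Remark 4.2], is to exploit that the intermediate matrices $e^{b''X}e^{c''V}$ are uniformly small on this window, so the cocycle difference is estimated by an integral of $\tau-\tau\circ (\text{small perturbation})$ whose total variation is bounded by $\|\nabla\tau\|_\infty\cdot(D^2\delta')\cdot|t|$, and this is absorbed into $\epsilon/2$ by choosing $\delta'\le \epsilon/(CD^3\|\nabla\tau\|_\infty)$ for a universal constant $C$.
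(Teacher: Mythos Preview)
Your $SL(2,\R)$ computation for the untime-changed flow is fine, and the overall structure---reduce to $\tau\equiv 1$, then control the cocycle discrepancy $A_x(t)$ defined by $\chi_{x,y}(\alpha(x,t))=\alpha(y,\chi_{x,y}(t)+A_x(t))$---is exactly what the paper does. The gap is in the last paragraph, where you claim the cocycle comparison is bounded by $\|\nabla\tau\|_\infty\cdot(D^2\delta')\cdot|t|$ and then absorb it by shrinking $\delta'$.

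This bound is not available. The perturbation relating $h_r x$ to $h_{\chi(r)}y$ has size of order $|b|+|c|+|c|r$ along the orbit, so the pointwise Lipschitz estimate on $\tau$ gives only
\[
\int_0^{T}\big|\tau(h_r x)-\tau(h_{\chi(r)}y)\big|\,dr \;=\; O\big(|b|T+|c|T^2\big),
\]
and on the maximal window $T\le D\min\{|b|^{-1},|c|^{-1/2}\}$ this is $O(D^2)$, a fixed constant, \emph{not} something containing a factor of $\delta'$. Since $b,c$ may be arbitrarily small (hence $T$ arbitrarily large) once $\delta'$ is fixed, no choice of $\delta'$ can make this small. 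In other words, the naive mean-value-theorem estimate loses exactly one order and is insufficient in the regime $|b|T\sim 1$, $|c|T^2\sim 1$ that the lemma targets.

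The paper closes this gap by invoking a genuine equidistribution input: Lemma~4.1 in \cite{KLU} bounds $\bigl|\int_0^{\alpha(x,t)}(\tau-\tau\circ g_s)(h_r x)\,dr\bigr|$ by $O(\epsilon^3)$ uniformly for $t=O(\min\{|b|^{-1},|c|^{-1/2}\})$. This is a deviation-of-ergodic-averages estimate for horocycle flows (of Flaminio--Forni type), exploiting cancellation in the signed integral rather than a pointwise bound. With it one gets $|A_x(t)|=O(\epsilon^2)$, and the lemma follows by applying \cite{KLU}, Proposition~4.1 with $\epsilon^2$ in place of $\epsilon$. You need to replace your Lipschitz argument by this ergodic-average estimate; without it the proof does not close.
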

\begin{proof}
Let $K=D$ and pick $\epsilon^2$ instead of $\epsilon$ in Proposition 4.1 in \cite{KLU}, then there exists $N_{\epsilon^2}>0$ and $\delta'>0$ such that for $|a|,|b|,|c|\leq \delta'$ and $|t|\in[N_{\epsilon^2}, D\cdot|c|^{-1/2}]$:
\begin{equation}\label{eq:thcf}
d(h^\tau_tx,h^\tau_{\chi_{x,y}(t)+A_x(t)}y) \leq \epsilon^2,
\end{equation}
where $A_x(t)$ is defined as $\chi_{x,y}(\alpha(x,t))=\alpha(y,\chi_{x,y}(t)+A_x(t))$ (Here, $\alpha(x,t)$ is defined in \eqref{eq:time-change}).

By Lemma 4.1 in \cite{KLU}, we have for every $t=O(\min\{|b|^{-1},|c|^{-1/2}\})$:
\begin{equation}
\left|\int_0^{\alpha(x,t)}(\tau-\tau\circ g_s)(h_tx)dt\right|\leq O(\epsilon^3).
\end{equation}
This inequality together with Proposition 4.1 in \cite{KLU} with $\epsilon^2$, we have the following estimate:
$$|A_x(t)|=O(\epsilon^2).$$

Combining \eqref{eq:thcf} and the above estimate, we have for $|t|\in[N_{\epsilon^2}, D\cdot|c|^{-1/2}]$:
$$d(h^\tau_tx,h^\tau_{\chi_{x,y}(t)}y) \leq \frac{\epsilon}{2}.$$
At the end we let $N_{\epsilon}:=N_{\epsilon^2}$. This finishes the proof.
\end{proof}

We will need also the following classical lemma, which is based on the fact that any two norms on a finite dimensional space are equivalent:
\begin{lemma}\label{lem:pol} There exists a constant $c_0$ such that for every polynomial $p(t)=\beta_1+\beta_2t+\beta_3t^2$ and every  $T>0$, if
$\sup_{t\in [0,T]}|p(t)|\leq c_0$ then
$$
|\beta_1|\leq 1/4,\;\;\; |\beta_2|\leq \frac{1}{4T}\;\;\; |\beta_3|\leq \frac{1}{4T^2}.
$$
\end{lemma}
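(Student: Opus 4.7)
The plan is to reduce to the case $T=1$ by the natural rescaling $s=t/T$, after which the statement is the familiar equivalence of norms on the three-dimensional space of polynomials of degree at most two.

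First, I would set $\tilde{p}(s)=p(Ts)=\tilde{\beta}_1+\tilde{\beta}_2 s+\tilde{\beta}_3 s^2$, where $\tilde{\beta}_1=\beta_1$, $\tilde{\beta}_2=\beta_2 T$ and $\tilde{\beta}_3=\beta_3 T^2$. The change of variable is a bijection between $[0,T]$ and $[0,1]$, so
\[
\sup_{s\in[0,1]}|\tilde{p}(s)|=\sup_{t\in[0,T]}|p(t)|.
\]
Thus the lemma is equivalent to the $T=1$ statement: there exists $c_0>0$ such that every real polynomial $\tilde{p}$ of degree at most two with $\sup_{s\in[0,1]}|\tilde{p}(s)|\leq c_0$ has all its coefficients bounded by $1/4$ in absolute value.

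Next, I would observe that on the vector space $V$ of real polynomials of degree at most two, both $\|\tilde{p}\|_{\infty}:=\sup_{s\in[0,1]}|\tilde{p}(s)|$ and $\|\tilde{p}\|_{\mathrm{coef}}:=\max(|\tilde{\beta}_1|,|\tilde{\beta}_2|,|\tilde{\beta}_3|)$ are norms. Since $V$ is finite-dimensional, all norms on $V$ are equivalent, so there is a constant $C>0$ such that $\|\tilde{p}\|_{\mathrm{coef}}\leq C\|\tilde{p}\|_{\infty}$ for every $\tilde{p}\in V$. Setting $c_0:=1/(4C)$ gives the desired implication at scale $T=1$.

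Finally, undoing the rescaling transports these coefficient bounds back to the original polynomial: $|\beta_1|=|\tilde{\beta}_1|\leq 1/4$, $|\beta_2|=|\tilde{\beta}_2|/T\leq 1/(4T)$, and $|\beta_3|=|\tilde{\beta}_3|/T^2\leq 1/(4T^2)$. There is no real obstacle here; the only "content" is the scale-invariant reduction, and the rest is the standard norm-equivalence argument on a three-dimensional space. One could alternatively give an explicit $c_0$ by sampling $\tilde{p}$ at three points (for instance $0, 1/2, 1$) and inverting the resulting Vandermonde system, but the abstract argument is cleaner and sufficient.
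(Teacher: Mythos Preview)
Your proof is correct and follows exactly the approach the paper indicates: the paper does not give a detailed argument but simply notes that the lemma is ``based on the fact that any two norms on a finite dimensional space are equivalent,'' which is precisely the norm-equivalence step you carry out after the rescaling $s=t/T$. Your write-up just makes explicit the scale reduction that is implicit in the paper's one-line justification.
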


From the above lemmas we deduce the following:

\begin{proposition}\label{pro:horo1}
 $h_t^\tau$ has the strong R-property.
\end{proposition}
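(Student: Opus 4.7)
The plan is to invoke Lemma~\ref{lem:dsf} to control the divergence along the unstable direction, and then to use Lemma~\ref{lem:pol} to quantify how slowly this divergence varies on scales of the form $[L,L+\kappa L]$. Combined with the uniform continuity assumption~\eqref{eq:contin}, this will give the strong R-property on a full-measure set, so one can take $Z=\operatorname{SL}(2,\R)/\Gamma$.

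First I would fix $p>0$ (say $p=1$) and, given $\epsilon$ and $N$, choose a large constant $D=D(p)$ and then extract $N_\epsilon$ and $\delta'$ from Lemma~\ref{lem:dsf}. For close points $x,x'$ one can write $x'=h_{s_0}^\tau\exp(bX)\exp(cV)x$ with $|s_0|,|b|,|c|\leq\delta'$, and the hypothesis that $x$ is not on the $h_t^\tau$-orbit of $x'$ forces $(b,c)\neq(0,0)$. Setting $y:=\exp(bX)\exp(cV)x$, Lemma~\ref{lem:dsf} yields $d(h_t^\tau x,h_{\chi_{x,y}(t)}^\tau y)\leq\epsilon/2$ on $[N_\epsilon,D\min(|b|^{-1},|c|^{-1/2})]$. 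Since $h_{\chi_{x,y}(t)}^\tau y=h_{t+f(t)}^\tau x'$ with
\[
f(t):=\chi_{x,y}(t)-t-s_0=-s_0+(e^{-2b}-1)t-e^{-3b}ct^2,
\]
I would let $M_1$ be the first time $t\geq 0$ at which $|f(t)|=p$; a direct inspection gives $M_1\asymp\min(|b|^{-1},|c|^{-1/2})$, which exceeds $N$ once $\delta$ is small in terms of $p$ and $N$, and which stays below $D\min(|b|^{-1},|c|^{-1/2})$ once $D$ is chosen large. For each $L\in[\kappa^{-2},M_1]$ set $p_L:=f(L)\in[-p,p]$; by construction $|p_{M_1}|=p\geq p/2$.

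The heart of the argument is the observation that $f$ is a quadratic polynomial satisfying $\sup_{[0,M_1]}|f|\leq p$, so Lemma~\ref{lem:pol} (rescaled to $[0,M_1]$ and applied to $f$) gives $|e^{-2b}-1|\leq Cp/M_1$ and $|c|\leq Cp/M_1^2$ for an absolute constant $C$. Hence, for $t\in[L,L+\kappa L]$ with $L\leq M_1$,
\[
|f(t)-f(L)|\leq|e^{-2b}-1|\kappa L+|c|\bigl((L+\kappa L)^2-L^2\bigr)\leq C'p\kappa.
\]
Choosing $\kappa<\epsilon$ small enough (and $\kappa^{-2}\geq N_\epsilon$), the right-hand side drops below the modulus of continuity $\eta'(\epsilon/2)$ from~\eqref{eq:contin}, hence $d(h_{t+p_L}^\tau x',h_{\chi_{x,y}(t)}^\tau y)<\epsilon/2$, which combined with Lemma~\ref{lem:dsf} gives $d(h_t^\tau x,h_{t+p_L}^\tau x')<\epsilon$ on \emph{all} of $[L,L+\kappa L]$. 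The measure condition in \textbf{R1} is then satisfied trivially.

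The main obstacle is the simultaneous bookkeeping of the interlocking constants: $\delta$ must be small enough (depending on $p$ and $N$) so that the divergence $|f|$ needs time $\geq N$ to reach magnitude $p$; $D$ must exceed the implicit constant in $M_1\asymp\min(|b|^{-1},|c|^{-1/2})$ so that Lemma~\ref{lem:dsf} remains valid up to $M_1+\kappa M_1$; and $\kappa$ must be chosen small in terms of $\epsilon$ and the modulus of continuity so that Lemma~\ref{lem:pol} renders the polynomial correction invisible to the metric. Beyond this bookkeeping no further ingredient is needed: the strong R-property upgrades Ratner's classical R-property only by asserting uniform-in-$L$ control on $[\kappa^{-2},M_1]$, and this uniformity is exactly what the quadratic structure of $f$ together with Lemma~\ref{lem:pol} provide.
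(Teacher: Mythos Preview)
Your proposal is correct and matches the paper's approach: both reduce via Lemma~\ref{lem:dsf} to the quadratic drift $f(t)=\chi(t)-t$, set $M_1$ to the first hitting time of $|f|=p$, take $p_L=f(L)$, and invoke Lemma~\ref{lem:pol} for the key estimate. The only cosmetic differences are that the paper chooses $p=c_0$ and $D=1$ (so the constraint $\epsilon<D^{-3}$ in Lemma~\ref{lem:dsf} is automatic), and that the paper deploys Lemma~\ref{lem:pol} to prove the \emph{upper bound} $M_1\leq\min(|b|^{-1},|c|^{-1/2})$ --- your ``direct inspection'' that $M_1\asymp\min(|b|^{-1},|c|^{-1/2})$ is not quite direct, since cancellation between the linear and quadratic terms must be ruled out, and this is precisely the contrapositive of Lemma~\ref{lem:pol} --- while handling the slow-variation bound $|f(t+s)-f(t)|\lesssim\kappa$ by a one-line computation using $|b|t\leq 1$ and $|c|t^2\leq 1$ rather than by feeding $\sup_{[0,M_1]}|f|\leq p$ back into Lemma~\ref{lem:pol} as you do.
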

\begin{proof}Let $p=c_0$, where $c_0$ is the constant from Lemma \ref{lem:pol}. We will show that the strong $R(p)$-property holds. Fix $\epsilon\in (0,1)$ and let $\kappa=\kappa(\epsilon):=\min\{\epsilon^{40},N_{\epsilon}^{-20}, \delta'\}$, where $\delta'>0$ and $N_\epsilon$ come from Lemma \ref{lem:dsf} for $D=1$. Let $\delta=\kappa^{10}$ and let $Z=M$. Take $x,x'\in M$ with $d(x,x')<\delta$ and $x$ not in the orbit of $x'$ (which implies that $b^2+c^2>0$). Then $x'=\exp(aU)\exp(bX)\exp(cV)x$, where $\max\{|a|,|b|,|c|\}<2\delta$.
Since $2\delta=2\kappa^{10}\leq\delta'^{10}<\delta'$ it follows that we can use Lemma \ref{lem:dsf} for $x,x'$ to get that for $|t|\in\left[\kappa^{-2},\min\{|b|^{-1},|c|^{-1/2}\}\right]$ (notice that $\kappa^{-2}\geq N_\epsilon$), we have
\begin{equation}\label{eq:fc}
d(h^\tau_tx,h^\tau_{\chi_{x,x'}(t)}x') \leq \frac{\epsilon}{2}.
\end{equation}
Define $f(t)=f_{x,x'}(t):=\chi_{x,x'}(t)-t=(e^{-2b}-1)t-e^{-3b}ct^{2}$ for $t\in[\kappa^{-2},\min\{|b|^{-1},|c|^{-\frac{1}{2}}\}]$. We will show the following:
\begin{enumerate}
\item[${\bf f1.}$] for every  $t\in [\kappa^{-2},\min\{|b|^{-1},|c|^{-\frac{1}{2}}\}]$ and every $s\in [0,\kappa t]$, we have
$$
|f(t)-f(t+s)|\leq \epsilon^2,
$$
\item[${\bf f2.}$] there exists $t_0\in  [\kappa^{-2},\min\{|b|^{-1},|c|^{-\frac{1}{2}}\}]$ such that $|f(t_0)|\geq c_0$.
\end{enumerate}
WLOG we can assume that $t_0$ is the smallest number satisfying this property.
Notice that the strong R-property is then a straightforward consequence of ${\bf f1}$ and ${\bf f2}$. Indeed, it is enough to define $M:=t_0$, $p_L:=f(L)$ for $L\in [\kappa^{-2},M]$. By ${\bf f2}$ it follows that $p_M\geq c_0\geq p/2$. Moreover by ${\bf f1}$  and \eqref{eq:fc}, for every $L\in [\kappa^{-2},M]$ and every $t\in [L,L+\kappa L]$, we get
\begin{multline}
d(h_t^{\tau}(x),h_{t+p_L}^{\tau}(x'))=d\left(h_t^{\tau}(x),h_{t+f(t)+(f(L)-f(t))}^{\tau}(x'))\right)=d\left(h_t^{\tau}(x),h_{\chi_{x,x'}(t)}^{\tau}(x'))\right)\\+{\rm O}(\epsilon^2)\leq \epsilon/2+\epsilon/2=\epsilon.
\end{multline}
This finishes the proof of the strong R-property.
So it only remains to show ${\bf f1}$ and ${\bf f2}$. By definition,
$$
|f(t)-f(t+s)|=(e^{-2b}-1)s-e^{-3b}c(-2st-s^2).
$$
Moreover, $|(e^{-2b}-1)s|\leq 3|b|\kappa t\leq 3\kappa<\epsilon^3$ and similarly $|e^{-3b}c(-2st-s^2)|\leq 5|c|\kappa t^2\leq 5\kappa<\epsilon^3$. This finishes the proof of ${\bf f1}$. For  ${\bf f2}$
notice first that for every $t\in [0,\kappa^{-2}]$, we have $|f(t)|\leq |(e^{-2b}-1)t|+|e^{-3b}ct^2|\leq 4\delta\kappa^{-2}+4\delta\kappa^{-4}\leq \epsilon$ by the definition of $\delta>0$. Therefore if {\bf f2} doesn't hold, i.e.\ $|f(t)|\leq c_0$ for every $t\in [0,\min\{|b|^{-1},|c|^{-\frac{1}{2}}\}]$, then by Lemma \ref{lem:pol}, the coefficients of $f$ have to satisfy
$$|e^{-2b}-1|\leq\frac{1}{4\min\{|b|^{-1},|c|^{-\frac{1}{2}}\}}=\frac{1}{4}\max\{|b|,|c|^{\frac{1}{2}}\},$$
and
$$|e^{-3b}c|\leq\frac{1}{4\min\{|b|^{-2},|c|^{-1}\}}=\frac{1}{4}\max\{|b|^2,|c|\}.$$
Since $|e^{-2b}-1|\geq |b|$,  $|e^{-3b}c|\geq\frac{|c|}{2}$, thus we get
$$|b|\leq\frac{1}{4}|c|^{\frac{1}{2}}\text{ and }\frac{1}{2}|c|\leq\frac{1}{4}|b|^2,$$
which leading to
\begin{equation}\label{eq:contradiction}
2|c|\leq\frac{1}{16}|c|.
\end{equation}
Since $x$ and $x'$ are not in the same orbit of the horocycle flow, we obtain $b^2+c^2>0$. This together with  $|b|\leq\frac{1}{4}|c|^{\frac{1}{2}}$ imply that $|c|>0$, which guarantees that  \eqref{eq:contradiction} is a contradiction. This finishes the proof.
\end{proof}

\bigskip
\subsubsection{Horocycle flows over
compact surfaces of variable
negative curvature}

In this subsection we assume that $v_t$ is the horocycle flow with the uniform parametrization on the tangent space of a negatively curved surface (with variable curvature), see Section \ref{sec:var}. In this section we will use a slightly different representation of two nearby points than in Section \ref{sec.tco}.  More precisely using again the fact that $\exp$ is locally a smooth bijection, we can represent two sufficiently close points $x,y\in M$ as $y=g_av_bk_c x$. The use of this decomposition is not necessary but it makes the computations and formulas more compact.

We state here several crucial lemmas which are based on \cite{Fel-Orn}:

 \begin{lemma}\label{lm:variahoroConLem}There exists $\gamma\in (0,1/2)$ and $C_0>0$ such that for every $\epsilon>0$, there exists $\kappa'=\kappa'(\epsilon)>0$ and $\delta'=\delta'(\epsilon)>0$ such that for every $x\in M$ and every $y=g_av_bk_cx$ with $|a|,|b|,|c|<\delta'$, we have
 \be\label{eq:FO1}
 d(v_sx,v_{e^{a}\sigma(x,s,c)}y)<\epsilon \mbox{ for every } s\in \left[0, \frac{\delta'^2}{2|c|}\right],
 \ee
 where  $\sigma(\cdot,\cdot,\cdot):M\times \R\times \R\to \R$ satisfies the following:
 \begin{itemize}
 \item[$(i)$] {\em scaling property}: for every $r\in \R$, $ \sigma(x,e^rs,e^{-r}t)=e^r\sigma(g_{-r}x,s,t)$;
 \item[$(ii)$] for every $|\kappa''|<\kappa'$, we have
 \be\label{eq:FO2}
 |(\sigma(x,s,c)-s)-(\sigma(x,(1+\kappa'')s,c)-(1+\kappa'')s)|\leq \epsilon |\sigma(x,s,c)-s|,
 \ee
 and
 \be\label{eq:FO3}|\sigma(x,\frac{s}{2},t)-\frac{s}{2}|\leq(\frac{1}{2}-\gamma)|\sigma(x,s,t)-s|;
 \ee
 \item[$(iii)$] if $|sc|$ is small enough, then $\sigma(x,s,c)-s$ is monotone in $s$.
 \item[$(iv)$] $\sigma(x,0,c)=0$.
 \end{itemize}
 \end{lemma}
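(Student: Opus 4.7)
The plan is to construct $\sigma(x, s, c)$ implicitly via the local product structure of the stable and unstable foliations, and then verify each of (i)--(iv) in turn. Concretely, for fixed $x$ and small $c$, I would define $\sigma(x, s, c)$ to be the unique $\tau$ near $s$ for which $v_\tau k_c x$ lies on the stable leaf of $v_s x$; equivalently, there exists $c^{\ast} = c^{\ast}(x, s, c)$ with
\[
v_s x = k_{c^{\ast}} v_{\sigma(x,s,c)} k_c x.
\]
Existence, uniqueness, and smooth dependence on $s$ follow from the local product structure once $|c|$ and $|sc|$ are small, and $\sigma(x, 0, c) = 0$ is immediate since $k_c x$ already lies on the stable leaf of $x$, giving (iv).

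To prove the closeness estimate \eqref{eq:FO1}, I would use the commutation $v_t g_a = g_a v_{e^{-a} t}$ coming from $g_a v_s = v_{e^a s} g_a$ to rewrite
\[
v_{e^a \sigma(x,s,c)} y \;=\; v_{e^a \sigma(x,s,c)} g_a v_b k_c x \;=\; g_a v_{\sigma(x,s,c) + b} k_c x \;=\; g_a v_b k_{c^{\ast}} v_s x,
\]
so that the distance from $v_s x$ is controlled by $|a| + |b| + |c^{\ast}|$ in a local chart. Shrinking $\delta'$ takes care of $a$ and $b$; the crux is to bound $|c^{\ast}|$ by $O(|cs|)$ uniformly in $x$, which is exactly the Feldman--Ornstein control of the transverse holonomy between nearby uniformly parametrized unstable leaves. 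With $s \leq \delta'^2/(2|c|)$ this gives $|c^{\ast}| = O(\delta'^2) < \epsilon$ after further shrinking $\delta'$. The scaling relation (i) is then a direct consequence of $g_r v_t = v_{e^r t} g_r$ and $g_r k_t = k_{e^{-r} t} g_r$ applied to the defining equation for $\sigma$; the monotonicity in (iii) follows from the strict monotonicity of the leading-order expansion of $c^{\ast}$ in $s$ in the small-$|sc|$ regime.

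The main obstacle is property (ii), which quantifies how $\sigma(x, s, c) - s$ depends on $s$. The contraction \eqref{eq:FO3} encodes that the defect grows strictly faster than linearly in $s$, so halving $s$ kills more than half of the defect and yields a uniform constant $\gamma \in (0, 1/2)$. To extract this I would differentiate the defining relation $v_s x = k_{c^{\ast}} v_\sigma k_c x$ in $s$, obtaining an ODE-type expression for $\partial_s \sigma$ in terms of the Jacobians of the Margulis parametrization along the stable and unstable directions. The variable-curvature case rules out the exact algebraic formulas available in $\operatorname{SL}(2,\R)$, so one must instead invoke the $g_t$-invariance of the Margulis measure together with the H\"older regularity of the horocycle holonomy, as in \cite{Fel-Orn}, to control the growth of the defect uniformly in $x$ and produce the contraction constant $\gamma$. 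Once \eqref{eq:FO3} is established, \eqref{eq:FO2} follows by a mean value argument applied to $\partial_s(\sigma - s)$ using the same holonomy bounds.
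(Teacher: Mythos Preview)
Your sketch is more detailed than the paper's own treatment: the paper does not prove this lemma from scratch but simply records which results in Feldman--Ornstein \cite{Fel-Orn} supply each part (\eqref{eq:FO1} from their Lemma~3.8, (i) and (iv) from the geometric definition of $\sigma$, (ii) from their Lemma~3.7, and (iii) from their Corollary~3.5). The construction you outline---defining $\sigma$ as the stable holonomy between nearby uniformly parametrized unstable leaves and reading off the properties from the commutation relations $g_r v_t = v_{e^r t} g_r$, $g_r k_t = k_{e^{-r} t} g_r$---is exactly the Feldman--Ornstein framework the paper is citing, so the approaches coincide in substance.

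Two small remarks on your write-up. In the chain for \eqref{eq:FO1} you pass from $g_a v_{\sigma+b} k_c x$ to $g_a v_b k_{c^\ast} v_s x$, but $v_b$ and $k_{c^\ast}$ do not commute; the identity should be read up to a higher-order error $O(|b|\cdot|c^\ast|)$, which is harmless in this regime. Second, \eqref{eq:FO2} does not really \emph{follow} from \eqref{eq:FO3}; in \cite{Fel-Orn} both are obtained in parallel (their Lemma~3.7) from the same H\"older control on $\partial_s(\sigma-s)$, not one from the other. Neither point constitutes a genuine gap.
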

\begin{proof}
The first part of the above lemma, i.e.\ \eqref{eq:FO1} follows from Lemma 3.8. in \cite{Fel-Orn}, $(i)$ follows from geometric definition of $\sigma$ in \cite{Fel-Orn} and renormalization property of the flow $u_t$, $(ii)$ follows from Lemma 3.7 in \cite{Fel-Orn}, $(iii)$ follows from Corollary 3.5. in \cite{Fel-Orn} and $(iv)$ also follows from geometric definition of $\sigma$ in \cite{Fel-Orn}.
\end{proof}

\begin{lemma}\label{lm:variahoroConLem2} Fix $\epsilon>0$ and let
$\delta'=\delta'(\epsilon)>0$ be the constant from Lemma \ref{lm:variahoroConLem}. There exists $\delta\in(0,\delta')$ such that if $y=g_av_bk_cx$, $c\neq0$ and $|a|,|b|,|c|<\delta$, then there exists $N>0$ such that $N|c|<\frac{\delta'^2}{2}$ and
\be\label{eq:hjk}
|e^a\sigma(x,N,c)-N|=1.
\ee
Moreover, if $N$ is the smallest number satisfying \eqref{eq:hjk}, then $N$ satisfies: $N\geq\frac{1}{|a|}$,
\begin{equation}\label{eq:controlofM}
e^a|\sigma(x,N,c)-N|\leq \frac{3}{2}\gamma^{-1},
\end{equation}
and
\begin{equation}\label{war}
|(e^a-1)N|\leq1+\frac{3}{2}\gamma^{-1}.
\end{equation}
\end{lemma}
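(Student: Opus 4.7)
Define $F(N) := e^a \sigma(x, N, c) - N$ and $h(s) := \sigma(x, s, c) - s$, so that $F(N) = e^a h(N) + (e^a - 1) N$. By Lemma \ref{lm:variahoroConLem}(iv), $F(0) = 0$ and $F$ is continuous. The plan is to locate the smallest $N > 0$ with $|F(N)| = 1$ inside the valid range $N|c| < \delta'^2/2$, then derive the three claimed bounds from the minimality of $N$ combined with the growth property \eqref{eq:FO3}.

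For existence, I would use property (iii) of Lemma \ref{lm:variahoroConLem}: $h(s)$ is monotone in $s$ on the range where $|sc|$ is small, and since $h(0) = 0$, $h$ has a constant sign there. Reversing \eqref{eq:FO3} gives $|h(2s)| \geq (1/2 - \gamma)^{-1} |h(s)|$, so $|h|$ grows faster than linearly (by a factor strictly larger than $2$ per doubling). After choosing $\delta$ sufficiently small in terms of $\delta'$ and $\gamma$, this super-linear growth makes $|F(N)|$ exceed $1$ strictly before $N$ leaves the range $N|c| < \delta'^2/2$, since the linear contribution $|(e^a-1)N|$ is at most $|a|N \lesssim \delta N_{\max}$. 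Continuity and the intermediate value theorem then give a smallest $N \in (0, \delta'^2/(2|c|))$ with $|F(N)| = 1$.

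For the main estimate \eqref{eq:controlofM}, the key identity is
\[
e^a h(N) - 2\, e^a h(N/2) \,=\, F(N) - 2 F(N/2),
\]
obtained by cancelling the common linear contribution $(e^a - 1) N$. By minimality of $N$, $|F(N/2)| < 1$, so the right-hand side has absolute value $< 3$. On the left side, monotonicity of $h$ forces $h(N)$ and $h(N/2)$ to share a sign, and by \eqref{eq:FO3}, $2|h(N/2)| \leq (1 - 2\gamma)|h(N)|$; hence $|h(N) - 2 h(N/2)| = |h(N)| - 2|h(N/2)| \geq 2\gamma |h(N)|$. Multiplying by $e^a$ and combining the two bounds yields $2\gamma \cdot e^a |h(N)| \leq 3$, which is precisely \eqref{eq:controlofM}.

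The bound \eqref{war} then follows immediately from $(e^a - 1)N = F(N) - e^a h(N)$, the triangle inequality, $|F(N)| = 1$, and the estimate just proved. For the lower bound $N \geq 1/|a|$, the argument again exploits the minimality of $N$: for any $N' < 1/|a|$ the linear contribution $|(e^a - 1)N'| \leq (e^{|a|} - 1)/|a|$ together with the control of $e^a|h(N')|$ obtained by iterating \eqref{eq:FO3} downward from $N$ is tight enough to force $|F(N')| < 1$, ruling out a smaller minimizer. The hardest step will be the existence portion: one must choose $\delta$ carefully so that the geometric growth from \eqref{eq:FO3} takes effect within the allowed range $N|c| < \delta'^2/2$; once that is set up, the estimates for $|h(N)|$ and for $(e^a - 1)N$ fall out cleanly from the minimality argument above.
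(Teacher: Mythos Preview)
Your treatment of existence and of the two inequalities \eqref{eq:controlofM} and \eqref{war} is essentially the paper's argument: both compare $F$ (the paper writes $r(t)=|F(t)|$, $A(t)=h(t)$) at $N$ and at $N/2$, invoke \eqref{eq:FO3}, and use minimality of $N$. Your identity $e^a h(N) - 2 e^a h(N/2) = F(N) - 2F(N/2)$ is a clean repackaging of the paper's chain $r(N/2) \geq |(e^a-1)\tfrac{N}{2}| - e^a|A(\tfrac{N}{2})| \geq \gamma\, e^a|A(N)| - \tfrac12 r(N)$, and the existence argument via geometric growth of $|h|$ under doubling is the same as the paper's.

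The genuine gap is your sketch for $N \geq 1/|a|$. You propose to show $|F(N')| < 1$ for every $N' < 1/|a|$ by bounding the linear piece by $(e^{|a|}-1)/|a|$ and the nonlinear piece by ``iterating \eqref{eq:FO3} downward from $N$''. This cannot work: first, $(e^{|a|}-1)/|a| > 1$ for every $a\neq 0$, so the linear bound alone already exceeds $1$; second, iterating \eqref{eq:FO3} from $N$ down to $N'$ presupposes $N' \leq N$, which is precisely the conclusion you are after, and even granting it you only obtain $e^a|h(N')| \leq e^a|h(N)| \leq \tfrac32\gamma^{-1}$, which together with the linear term is far larger than $1$. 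The paper does not try to bound $|F(N')|$ for $N'<1/|a|$ at all; it argues directly at $N$ via the reverse triangle inequality $1 = |F(N)| \geq e^a|h(N)| - |e^a-1|\,N$ and then uses a \emph{lower} bound $e^a|h(N)| \geq \tfrac32\gamma^{-1} > 3$ (from $\gamma<\tfrac12$) to force $|e^a-1|\,N \geq \tfrac32\gamma^{-1}-1 > 2$. Your route supplies no lower bound on $e^a|h(N)|$, so this final step is missing.
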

\begin{proof}
The proof of this lemma is similar to the proof of Lemma 3.9. in \cite{Fel-Orn}, we give a full proof here for completeness.

Denote $r(t)=|e^a\sigma(x,t,c)-t|$, $A(t)=\sigma(x,t,c)-t$, then we have $$r(t)=|e^aA(t)+(e^a-1)t|.$$
Notice that by \eqref{eq:FO3}, since $\gamma<1/2$ and by the definition of $A(t)$ it follows that
$A(t)\geq \frac{1}{1/2-\gamma} A(t/2)$.
Therefore if $\delta$ is small enough we can guarantee that for $|c|<\delta$, we have $|A(\frac{\delta'^2}{2|c|})|>4\gamma^{-1}$. Let $B=\frac{\delta'^2}{2|c|}$ to simplify the notation. Notice that
by \eqref{eq:FO3} and the definition of $A(t)$ if $r(B)\leq 1$, then we have
\begin{equation}\label{eq:eqqw}
\begin{aligned}
r\left(\frac{B}{2}\right)&\geq\left|(e^a-1)\frac{B}{2}\right|-\left|e^aA\left(\frac{B}{2}\right)\right|\\
&\geq\left|(e^a-1)\frac{B}{2}\right|-(\frac{1}{2}-\gamma)|e^aA(B)|\\
&\geq\gamma|e^aA(B)|-\frac{1}{2}r(B)\geq\gamma|e^aA(B)|-\frac{1}{2}>1.
\end{aligned}
\end{equation}
This implies that there is some number $N\in [0,B]$ such that $r(N)=1$. We WLOG assume that $N$ is the smallest number satisfying $r(N)=1$. Notice that reasoning analogously to \eqref{eq:eqqw} with $B=N$, we get
$$1\geq r\left(\frac{N}{2}\right)\geq \gamma|e^aA(N)|-\frac{1}{2}.$$
Therefore,
$$|e^aA(N)|\leq\frac{3}{2}\gamma^{-1},$$
which gives \eqref{eq:controlofM} by the definition is $A(t)$.

Then the definition of $r(t)$ and above inequality imply $$|(e^a-1)N|\leq(1+\frac{3}{2}\gamma^{-1}),$$
which gives \eqref{war}.

Finally, notice that:
\begin{equation}
\begin{aligned}
1&=|e^a\sigma(x,N,c)-N|=|e^aA(N)-(1-e^a)N|\\
&\geq|e^aA(N)|-|1-e^a|N\\
&=\frac{3}{2}\gamma^{-1}-|1-e^a|N\\
&\geq \frac{3}{2}\gamma^{-1}-2|a|N.
\end{aligned}
\end{equation}
Since $0<\gamma<1/2$, we have $N\geq \frac{1}{|a|}$.
\end{proof}

Using the above lemmas, we have the following:

\begin{proposition}\label{pro:horo2}
$v_t$ has the strong R-property.
\end{proposition}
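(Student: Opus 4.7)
My plan is to parallel the proof of Proposition \ref{pro:horo1}, with Lemma \ref{lm:variahoroConLem} taking the role of Lemma \ref{lem:dsf} and Lemma \ref{lm:variahoroConLem2} taking the role of the polynomial-coefficient argument from Lemma \ref{lem:pol}. The drift function to study is
$$f(s) := e^{a}\sigma(x,s,c) - s = e^a A(s) + (e^a - 1) s,$$
where $A(s) := \sigma(x,s,c) - s$; it plays the role of the quadratic $\chi_{x,x'}(t) - t$ from the constant curvature setting, with $A(s)$ as its ``nonlinear part'' and $(e^a-1)s$ as its ``linear part''.

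Set $p := 2$ and let $Z$ be the full state space. For given $\epsilon \in (0,1)$ and $N \in \N$, I would choose $\kappa = \kappa(\epsilon) > 0$ smaller than $\kappa'(\epsilon)$ from Lemma \ref{lm:variahoroConLem} and small enough for the estimates below, and choose $\delta > 0$ smaller than both $\delta'(\epsilon)$ (Lemma \ref{lm:variahoroConLem}) and $\delta(\epsilon)$ (Lemma \ref{lm:variahoroConLem2}), and sufficiently small that the number $M_0$ furnished by Lemma \ref{lm:variahoroConLem2} satisfies $M_0 \geq \kappa^{-2} \vee N$. For $x,x'$ with $d(x,x') < \delta$ and $x$ not on the $v_t$-orbit of $x'$, write $x' = g_a v_b k_c x$ with $|a|,|b|,|c| < 2\delta$ and $a^2 + c^2 > 0$, let $M_1 \in [\kappa^{-2}, M_0]$ be the smallest time at which $|f(M_1)| = 1$, and define $p_L := f(L)$ for $L \in [\kappa^{-2}, M_1]$. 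By minimality of $M_1$ and continuity of $f$ one has $|p_L| \leq 1 < p$, while $|p_{M_1}| = 1 = p/2$, giving the endpoint condition of Definition \ref{def:sr}.

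For the measure-closeness in \textbf{R1}, Lemma \ref{lm:variahoroConLem} already supplies $d(v_t x, v_{t + f(t)} x') < \epsilon/2$ on $[0, \delta'^2/(2|c|)] \supseteq [0, M_1]$, so it suffices to prove $|f(t) - f(L)|$ is small on $[L, L + \kappa L]$; the uniform continuity hypothesis \eqref{eq:contin} then converts this into the desired closeness of $v_{t+f(t)} x'$ and $v_{t+p_L} x'$. Splitting $f(t) - f(L) = e^a(A(t) - A(L)) + (e^a - 1)(t - L)$, property $(ii)$ of Lemma \ref{lm:variahoroConLem} bounds the first summand by $\epsilon \cdot e^a |A(L)| \leq \tfrac{3}{2}\gamma^{-1}\epsilon$ (using \eqref{eq:controlofM} together with the monotonicity from property $(iii)$), while \eqref{war} bounds the second by $\kappa \cdot |(e^a-1)M_1| \leq \kappa(1 + \tfrac{3}{2}\gamma^{-1})$; both are negligible for $\kappa$ sufficiently small in terms of $\epsilon$ and $\gamma$.

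The main obstacle will be ensuring the uniform lower bound $M_1 \geq N$ in the degenerate regime where $|a|$ is much smaller than $|c|$, since there the estimate $M_0 \geq 1/|a|$ from Lemma \ref{lm:variahoroConLem2} is uninformative; the remedy is to supplement that lemma with a scaling argument based on property $(i)$ of $\sigma$, which shows that $|A(\cdot)|$ cannot reach the threshold $1$ until times of order $|c|^{-1/2}$, and this can be forced large by shrinking $\delta$. Beyond this subtlety, the proof is a direct translation of the constant curvature argument, with the algebraic cancellations in the polynomial $\chi_{x,x'}$ replaced by the structural properties of $\sigma$ encoded in Lemma \ref{lm:variahoroConLem}.
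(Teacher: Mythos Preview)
Your approach matches the paper's proof essentially point for point: define the drift $g(t)=e^{a}\sigma(x,t,c)-t$, use Lemma~\ref{lm:variahoroConLem2} to locate the first time it hits $\pm 1$, and control the oscillation on $[L,(1+\kappa)L]$ via the splitting $e^{a}A(t)+(e^{a}-1)t$ together with property~$(ii)$, monotonicity~$(iii)$, and the bounds \eqref{eq:controlofM}, \eqref{war}.

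Two small corrections are needed. First, your bound $|e^{a}(A(t)-A(L))|\le \epsilon\cdot e^{a}|A(L)|\le \tfrac{3}{2}\gamma^{-1}\epsilon$ is \emph{not} small relative to $\epsilon$ (recall $\gamma^{-1}>2$) and does not depend on $\kappa$, so it cannot be made negligible by shrinking $\kappa$ as you claim. The paper remedies this by invoking Lemma~\ref{lm:variahoroConLem} with the rescaled parameter $\gamma\epsilon/3$ in place of $\epsilon$; then property~$(ii)$ produces the factor $\gamma\epsilon/3$, and the product $\tfrac{\gamma\epsilon}{3}\cdot\tfrac{3}{2}\gamma^{-1}=\epsilon/2$ is acceptable. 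Second, Lemma~\ref{lm:variahoroConLem2} explicitly assumes $c\neq 0$, so the degenerate case $c=0$ (where the drift is purely linear, $f(s)=(e^{a}-1)s$, and one takes $M=|1-e^{a}|^{-1}$) must be handled separately; the paper does this in a short paragraph before the main argument. Your remark that one must also secure $M_{1}\ge N$ via scaling when $|a|$ is small is in fact more careful than the paper, which leaves this implicit.
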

\begin{proof} We will show that the strong R-property holds with $p=1$.
Fix $\epsilon\in(0,1)$ and let $\kappa:=\frac{1}{2}\min\{\kappa'(\frac{\gamma\epsilon}{3}),\gamma\epsilon^{20}\}$. By taking  $\kappa$ smaller if necessary, we can WLOG assume that if $|r|<10\kappa$, then for every $x\in M$, $d(u_rx,x)<\epsilon/10$.

Let $\delta>0$ come from Lemma \ref{lm:variahoroConLem2} for $\delta'=\delta'(\frac{\gamma\epsilon}{3})$ where $\delta'$ comes from from Lemma \ref{lm:variahoroConLem}). By taking smaller $\delta$ if necessary, we can WLOG assume that for every $x\in M$ and every $x'=g_av_bk_cx$ if $d(x,x')<\delta$ then $|a|,|b|,|c|<\min\{\delta',\kappa^{10}\}$. Take any $x, x'\in M$ so that $d(x,x')<\delta$ and $x'$ is not in the orbit of $x$ (which implies that $a^2+c^2>0$).

\noindent\textbf{First case: $c=0$}. In this case, the proof is similar to Lemma 3.9. in \cite{Fel-Orn}, we give a full proof here for completeness.

If $c=0$, then $a\neq0$ and $x'=g_av_bx$. Applying $v_{e^{a}t}$ to both sides and also using the renormalization property of uniform parametrization, we have
$$v_{e^{a}t}x'=g_av_{b+t}x.$$
Since $\delta$ is small enough (and so $|a|,|b|<\delta'$), we get
\begin{equation}\label{eq:w=0}
d(v_{e^{a}t}x',v_tx)<\frac{\epsilon}{2}.
\end{equation}

Let $M:=|1-e^a|^{-1}\geq \frac{1}{2|a|}$. Then, for any $L\in[\kappa^{-2},M]$ and $t\in[L,(1+\kappa)L]$, we have:
\begin{equation}
|L(e^{a}-1)|\leq M|e^{a}-1|=1,
\end{equation}
and
\begin{equation}
|(t-L)(e^{a}-1)|\leq \kappa L|1-e^{a}|\leq 2\kappa.
\end{equation}
This implies that for $t\in[L,(1+\kappa)L]$, $|e^at-[t+(e^a-1)L]|\leq 2\kappa$. Define $p_L:=(e^{a}-1)L$. Then $p_M=1$ and  by \eqref{eq:w=0}, for $t\in[L,(1+\kappa)L]$,  we have
$$
d(v_tx,v_{t+p_L}x')\leq d(v_tx,v_{e^at}x')+d(v_{e^at}x',v_{t+p_L}x')\leq \epsilon/2+\epsilon/2=\epsilon
$$
This gives the strong R-property in the case $c=0$.
\\

\noindent\textbf{Second case: $c\neq0$}.

If $c\neq0$, then by Lemma \ref{lm:variahoroConLem} for $x,x'$ such that $d(x,x')<\delta$, we have for $t\leq\frac{\delta'^2}{2|c|}$ :
\begin{equation}\label{eq:5.1epsilon/3}
d(v_tx,v_{e^a\sigma(x,t,c)}x')<\frac{\gamma\epsilon}{3}.
\end{equation}

Define $g(t)=g_{x,x'}(t):=e^{a}\sigma(x,t,c)-t$ for $t\in[\kappa^{-2},\frac{\delta'^2}{2|c|}]$. We will show the following:
\begin{enumerate}
\item[${\bf g1.}$] there exists $t_0\in  [\kappa^{-2},\frac{\delta'^2}{2|c|}]$ such that $|g(t_0)|= 1$.
\item[${\bf g2.}$] let $M$ be the smallest number satisfying {\bf g1}. Then for every  $t\in [\kappa^{-2},M]$ and every $s\in [0,\kappa t]$, we have
$$
|g(t)-g(t+s)|\leq \frac{2\epsilon}{3},
$$
\end{enumerate}

Now we will show how {\bf g1} and {\bf g2} implies the strong R-property. Denote $p_L=e^{a}\sigma(x,L,c)-L$. Then by {\bf g2} we have $|p_M|\geq\frac{1}{2}$ and $|p_L|\leq1$ for $L\in[\kappa^{-2},M]$. Moreover, by {\bf g1} and \eqref{eq:5.1epsilon/3}, for every $L\in[\kappa^{-2},M]$ and every $t\in[L,L+\kappa L]$, we get
\begin{equation}
d(v_{t}x,v_{t+p_L}x')\leq d(v_tx,v_{e^a\sigma(x,t,c)}x')+ d(v_{e^a\sigma(x,t,c)}x',v_{t+p_L}x')<\epsilon.
\end{equation}
This finishes the proof of strong R-property when $c\neq0$. So it only remains to show {\bf g1} and {\bf g2}. In fact, notice that {\bf g1} directly follows from Lemma \ref{lm:variahoroConLem2}. Therefore we only need to show {\bf g2}.

As for {\bf g2}, due to $\kappa^{-2}\leq t\leq M$ and $M\leq\frac{\delta'^2}{2|c|}$, then by Lemma \ref{lm:variahoroConLem} $(iii)$, we know $e^a(\sigma(x,t,c)-t)$ is monotone for $t\in[\kappa^{-2}, M]$. Then notice that $e^a(\sigma(x,0,c)-0)=0$ and thus we have $e^a|\sigma(x,t,c)-t|\leq e^a|\sigma(x,M,c)-M|$. Recall the choice of $M$ and \eqref{eq:controlofM}, we have for $t\in[\kappa^{-2}, M]$:
\be\label{eq:eat}e^a|\sigma(x,t,c)-t|\leq\frac{3}{2}\gamma^{-1}.
\ee

Then for $t\in[\kappa^{-2},M]$ and $s\in[0,\kappa t]$ we have,
\begin{equation}
\begin{aligned}
&|g(t)-g(t+s)|=|[e^{a}\sigma(x,t,c)-t]-[e^{a}\sigma(x,t+s,c)-(t+s)]|\\
=&|e^a[(\sigma(x,t,c)-t)-(\sigma(x,t+s,c)-(t+s))]-(e^a-1)s|\\
\leq& e^a|(\sigma(x,t,c)-t)-(\sigma(x,t+s,c)-(t+s))|+|e^a-1|\kappa t\\
\leq&\frac{\gamma\epsilon}{3}e^a|\sigma(x,t,c)-t|+|e^a-1|\kappa M\\
\leq& \frac{\epsilon}{2}+(1+\frac{3}{2}\gamma^{-1})\kappa<\frac{2\epsilon}{3},
\end{aligned}
\end{equation}
where line $3$ to line $4$ is due to $(ii)$ in  Lemma \ref{lm:variahoroConLem} for $s=t$ and $\kappa''=\frac{s}{t}\leq \kappa\leq \kappa'$ and line $4$ to line $5$ is due to \eqref{eq:eat} and \eqref{war} in  Lemma \ref{lm:variahoroConLem2} (recall that $N=M$). This finishes the proof of {\bf g2}.
\end{proof}

\subsection{Time changes of bounded type Heisenberg nilflows}

We will work with a special flow $T_t^\tau$, where the rotation frequency is of bounded type. First, we have the following general lemma to deal with the strong R-property for special flows:
\begin{lemma}\label{lm:specialstrongR}
Suppose $T$ is an ergodic automorphism acting on a compact metric space $(X,\mathcal{B},\mu,d)$ and let $f\in C(X,\mathcal{B},\mu)$ be a positive function which is bounded away from zero. Let $T_t^{f}$ be the corresponding special flow and $q>0$. Assume that for every $\epsilon'>0$, there exists $\kappa'\in(0,\epsilon')$, $Z'\in\mathcal{B}$, $\mu(Z')\geq1-\epsilon'$ and $\delta'>0$ such that for every $x,x'\in Z$ with $d(x,x')<\delta'$ there exists $M'\in\mathbb{N}$ with $M'\geq 8C_0^2\kappa'^{-2}$ (where $C_0=\max\{2,3(\int_Xfd\mu)^{-\frac{1}{2}}\}$) such that  the following holds:
\begin{itemize}
  \item[\textbf{R1'}.] for every $L'\in[\kappa'^{-2},M']$, there exists $q_{L'}\in[-q,q]$ such that
  \begin{itemize}
  \item[a.] for every $n\in[L',L'(1+\kappa')]$,
  $$
   d(T^nx,T^nx')<\epsilon'\;\text{ and }\;  |S_{n}(f)(x)-S_{n}(f)(x')-q_{L'}|<\epsilon';
   $$
   \item[b.] $|q_{M'}|>q/2$.
  \end{itemize}

\end{itemize}
Then the special flow $(T_t^{f})_{t\in\mathbb{R}}$ has the strong R-property.
\end{lemma}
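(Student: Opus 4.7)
The plan is to lift the discrete-time control on $T$ provided by R1' to a continuous-time strong R-property for $T_t^f$. The key bridge is the identity $T_t^f(x,s)=(T^n x,\,s+t-S_n(f)(x))$ with $n=N(x,s,t)$, together with the Birkhoff-generic estimate $S_n(f)(x)\approx n\bar f$ (where $\bar f=\int_X f\,d\mu$), which forces $n\asymp t/\bar f$, so that continuous time $t$ in an interval of relative length $\kappa$ corresponds to a discrete $n$-interval of the same relative length.

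Given $\epsilon>0$ and $N\in\N$ for the strong R-property of $T_t^f$, I would first choose $\epsilon'=c\epsilon$ for a small constant $c=c(\bar f,\|f\|_\infty)$, and apply the hypothesis to obtain $\kappa'=\kappa'(\epsilon')$, $Z'\subset X$ with $\mu(Z')\geq 1-\epsilon'$, and $\delta'>0$. Using Birkhoff's ergodic theorem and Egorov's theorem, I would extract a subset $Z_1\subset X$ with $\mu(Z_1)\geq 1-\epsilon'$ on which $|S_n(f)(x)-n\bar f|<\epsilon' n$ uniformly for $n\geq n_0$. Set $\tilde Z=Z'\cap Z_1$ and $Z=\{(x,s)\in X^f:x\in\tilde Z\}$, so that $\mu^f(Z)\geq 1-\epsilon$. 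Take $\kappa=\kappa'/2$ and choose $\delta>0$ small enough that $d^f((x,s),(x',s'))<\delta$ forces $d(x,x')<\delta'$ and $|s-s'|<\epsilon'$. For $(x,s),(x',s')\in Z$ at such distance, apply R1' to obtain $M'\geq 8C_0^2\kappa'^{-2}$ satisfying items (a) and (b); set $M_1=\lfloor M'\bar f/2\rfloor$, shrinking $\delta$ further as a function of $N$ to guarantee $M_1\geq N$. For each $L\in[\kappa^{-2},M_1]$, set $L'=\lfloor L/\bar f\rfloor\in[\kappa'^{-2},M']$ and define $p_L=(s-s')-q_{L'}\in[-p,p]$ with $p=q+O(\delta)$.

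For $t\in[L,L(1+\kappa)]$, let $n=N(x,s,t)$; the ergodic bound on $Z_1$ places $n\in[L',L'(1+\kappa')]$ for all but an $O(\epsilon')\kappa L$-measure set of $t$'s. By R1' at level $n$, $d(T^n x,T^n x')<\epsilon'$ and $|S_n(f)(x)-S_n(f)(x')-q_{L'}|<\epsilon'$. A direct computation shows that the $t$-intervals $\{t:N(x,s,t)=n\}$ and $\{t:N(x',s',t+p_L)=n\}$ agree up to $O(\epsilon')$ at each endpoint, so outside a buffer of width $O(\epsilon')$ around each level-transition time, both $T_t^f(x,s)$ and $T_{t+p_L}^f(x',s')$ sit at the common base level $T^n x\sim T^n x'$ with vertical offset $O(\epsilon')$, hence $d^f(T_t^f(x,s),T_{t+p_L}^f(x',s'))<\epsilon$. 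Summing the bad-buffer widths over the $\sim \kappa' L'\sim 2\kappa L/\bar f$ indices $n$ in range gives total bad $t$-measure $O(\epsilon'\kappa L/\bar f)\leq \epsilon\kappa L$ after choosing $c$ small. The endpoint requirement $|p_{M_1}|\geq p/2$ follows from $|q_{M'}|>q/2$ in item (b) combined with $|s-s'|\ll q$.

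The main obstacle is the careful boundary-effect bookkeeping: the flow $T_t^f$ changes its base index at times $S_n(f)(x)-s$, while the comparison orbit $T_{t+p_L}^f(x',s')$ changes it at $S_n(f)(x')-s'-p_L$, and the choice $p_L=(s-s')-q_{L'}$ is precisely what aligns these two sequences of jump-times to within $O(\epsilon')$ via R1'. Controlling both coordinates of the product metric simultaneously on the overwhelming majority of $t$ requires this alignment to hold uniformly across all $n$ in the discrete range, which is exactly what the two-part hypothesis (closeness of orbits plus Birkhoff-sum matching) delivers. A secondary, more pedestrian point is ensuring $M_1\geq N$; since the hypothesis only gives the $N$-free lower bound $M'\geq 8C_0^2\kappa'^{-2}$, one absorbs the $N$-dependence into $\delta$, exploiting the fact that in all intended applications $M'$ can be made arbitrarily large once $d(x,x')$ is sufficiently small.
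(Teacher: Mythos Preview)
Your approach is essentially the paper's: lift the discrete hypothesis \textbf{R1'} to the flow via Birkhoff control of $N(x,s,t)\asymp t/\bar f$, and handle level-transition boundary effects. The paper does the latter by restricting to the set $X(\epsilon)=\{(x,s):C_{\min}\epsilon/8<s<f(x)-C_{\min}\epsilon/8\}$ (which has density $\geq 1-\epsilon/4$ along every orbit and forces the base indices of the two orbits to match exactly), rather than by summing buffer widths; and the paper takes $L'=N(x,s,L)$ and $p_L=-q_{N(x,s,L)}$ rather than $L'=\lfloor L/\bar f\rfloor$ and $p_L=(s-s')-q_{L'}$. These are cosmetic differences: your buffer-summing argument and the paper's $X(\epsilon)$ trick are equivalent, and the $(s-s')$ term you include is absorbed by $\epsilon'$ anyway.

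There is one genuine bookkeeping slip. With your choice $M_1=\lfloor M'\bar f/2\rfloor$, the discrete index at the endpoint is $L'=\lfloor M_1/\bar f\rfloor\approx M'/2$, \emph{not} $M'$. Hence $p_{M_1}=(s-s')-q_{M'/2}$, and \textbf{R1'}(b) tells you nothing about $|q_{M'/2}|$; the assertion that $|p_{M_1}|\geq p/2$ ``follows from $|q_{M'}|>q/2$'' is unjustified. The fix is simply to drop the factor $1/2$: take $M_1$ so that the associated $L'$ equals $M'$ (e.g.\ $M_1=S_{M'}(f)(x)-s$, which gives $N(x,s,M_1)=M'$ on the nose, or $M_1=\lfloor M'\bar f(1-C\epsilon')\rfloor$ in your parametrisation). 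With $\kappa=\kappa'/2$ and the Birkhoff bound, the window $t\in[M_1,(1+\kappa)M_1]$ then lands $n=N(x,s,t)$ inside $[M',(1+\kappa')M']$, which is exactly the range where \textbf{R1'} at $L'=M'$ applies, and the endpoint condition follows.
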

\begin{proof}We will show that the strong $R(p)$-property holds with $p=q$. We assume for simplicity that $\int_X f d\mu=1$. Fix $\epsilon>0$ and let $C_{\min}=\min\left\{2,\min_{x\in X}f(x)\right\}>0$. We apply the conditions of Lemma \ref{lm:specialstrongR} with $\epsilon':=\min\{\frac{\epsilon}{3},\frac{C_{\min}\epsilon}{16}\}$. By Ergodic Theorem, for every $\epsilon,\kappa'>0$, there exists $N_0=N(\epsilon,\eta)$ and a set $A=A(\epsilon,\eta)$ with $\mu(A)>1-\epsilon/10$ such that for every $R\geq N_0$ and every $x\in A$, we have
\begin{equation}\label{eq:ergodicEst}
\left|\frac{1}{R}S_R(f)(x)-1\right|\leq\frac{\kappa'}{300}.
\end{equation}

Define
$$
X(\epsilon):=\left\{(x,s)\in X^f:\frac{C_{\min}\epsilon}{8}<s<f(x)-\frac{C_{\min}\epsilon}{8}\right\}.
$$
Notice that for every interval $[a,b]$, with $b-a\geq \sup_{x\in X} f(x)$ and every $(x,s)\in X^f$, we have
\be\label{eq:badX}
\left|\{t\in [a,b]\;:\; T_t^f(x,s)\in X(\epsilon)\}\right|\geq (1-\frac{\epsilon}{4})(b-a).
\ee

Let $\kappa:=\min\left\{N_0^{-20},\frac{\kappa'^{20}}{C_0}\right\}$. Since $f$ is a continuous function it follows that there exists $\delta=\delta(\epsilon)\in [0,\min\{\delta',\epsilon'\}]$ such that for any two points $x,x'\in X$ satisfying $d(x,x')<\delta$  and every $n\in[0,\kappa^{-3}]$, we have
\be\label{eq:contg}
|S_n(f)(x)-S_n(f)(x')|<\epsilon.
\ee
Define $Z:=\{(x,s)\in X^f\;:\; x\in A\cap Z'\}\subset X^f$ (where $Z'$ comes from Lemma \ref{lm:specialstrongR}). Notice that by absolute continuity of integrals, up to changing $\epsilon>0$ if necessary, it follows that $\mu^f(Z)\geq 1-\epsilon$. Take $(x,s), (x',s')\in Z$ with $d^f((x,s),(x',s'))\leq \delta$ and $x\neq x'$. By the definition of $Z$ it follows that $x,x'\in Z'$ and so {\bf R1'} holds for $x$ and $x'$. Let $M'=M'(x,x')$ be as in {\bf R1'}. Define
\begin{equation}\label{eq:defml}
M:=\frac{f^{(M')}(x)-s}{2}.
\end{equation}
Notice that since $|q_{M'}|\geq q/2$, by \eqref{eq:contg} it follows that $M'\geq \kappa^{-3}$. Therefore
\begin{equation}
\begin{aligned}
M&\geq \frac{f^{(M')}(x)-s}{2}\geq \frac{1}{2}C_{\min}(M'-1)\geq \kappa^{-2},
\end{aligned}
\end{equation}
since $\kappa$ is small. Take $L\in [\kappa^{-2}, M]$ and let $t\in [L,L+\kappa L]$. Recall that $N(x,s,t)\in \N$ is defined by $T^f_t(x,s)=(T^{N(x,s,t)}x,s+t- S_{N(x,s,t)}(f)(x))$. By definition, we have
$$
[\sup_{x\in X}f(x)]N(x,s,t)\geq  S_{N(x,s,t)}(f)(x)\geq t\geq \kappa^{-2}.
$$
By the definition of $\kappa$ and since $(x,s)\in Z$, it follows that \eqref{eq:ergodicEst} holds for $R=N(x,s,t)$ and $x\in A$. Therefore,
\begin{equation}
\begin{aligned}
\left(1-\frac{\kappa'}{300}\right)N(x,s,t)&\leq S_{N(x,s,t)}(f)(x)\leq t+s\leq (1+\kappa)L+s\leq(1+\kappa)(L+s)\\
&\leq(1+\kappa)S_{N(x,s,L)+1}(f)(x)\\
&\leq(1+\kappa)\left(1+\frac{\kappa'}{300}\right)(N(x,s,L)+1)\\
&\leq(1+\kappa')\left(1-\frac{\kappa'}{300}\right)N(x,s,L),
\end{aligned}
\end{equation}
where the last inequality is due to the choice of $\kappa$ and $\kappa'$. The above inequality implies that $N(x,s,t)\leq (1+\kappa')N(x,s,L)$. On the other hand, due to monotonicity of $N(x,s,t)$ in $t$, we have $N(x,s,t)\geq N(x,s,L)$. As a result, we have
\be\label{eq:qe}N(x,s,L)\leq N(x,s,t)\leq (1+\kappa')N(x,s,L)
\ee
for $t\in[L,(1+\kappa)L]$ with $L\in[\kappa^{-2},M]$. Assume additionally that $t\in[L,(1+\kappa)L]$ is such that $T_t^f(x,s)\in X(\epsilon)$. Notice that by {\bf R1'} for $L'=N(x,s,L)$, using \eqref{eq:qe} and $|s-s'|<\delta$, we get
\begin{equation}
\begin{aligned}
S_{N(x,s,t)}(f)(x')&\leq S_{N(x,s,t)}(f)(x)-q_{N(x,s,L)}+\epsilon'< t+s-\frac{C_{\min}\epsilon}{8}-q_{N(x,s,L)}+\epsilon'\\
&\leq t+s'-q_{N(x,s,L)}+\delta-\frac{C_{\min}\epsilon}{8}+\epsilon',
\end{aligned}
\end{equation}
and
\begin{equation}
\begin{aligned}
S_{N(x,s,t)+1}(f)(x')&\geq S_{N(x,s,t)+1}(f)(x)-q_{N(x,s,L)}-\epsilon'>t+s+\frac{C_{\min}\epsilon}{8}-q_{N(x,s,L)}-\epsilon'\\
&\geq t+s'-q_{N(x,s,L)}-\delta+\frac{C_{\min}\epsilon}{8}-\epsilon'.
\end{aligned}
\end{equation}
Due to our choice of $\delta$, $\epsilon$ and $\epsilon'$, we have $\delta-\frac{C_{\min}\epsilon}{8}+\epsilon'\leq0$ and thus above inequalities imply that:
\begin{equation}
S_{N(x,s,t)}(f)(x')\leq t+s'-q_{N(x,s,L)}\leq S_{N(x,s,t)+1}(f)(x').
\end{equation}
By definition of the special flow, we have $$T^f_{t-q_{N(x,s,L)}}(x',s')=(T^{N(x,s,t)}x',t+s'-q_{N(x,s,L)}-S_{N(x,s,t)}(f)(x')),$$ and thus
\begin{equation}\label{eq:ann}
\begin{aligned}
&d^f(T^f_t(x),T^f_{t-q_{N(x,s,L)}}(x'))\\&=d(T^{N(x,s,t)}x,T^{N(x,s,t)}x')+|s-S_{N(x,s,t)}(f)(x)-s'+q_{N(x,s,L)}+S_{N(x,s,t)}(f)(x')|\\
&\leq d(T^{N(x,s,t)}x,T^{N(x,s,t)}x')+|s-s'|+|S_{N(x,s,t)}(f)(x)-S_{N(x,s,t)}(f)(x')-q_{N(x,s,L)}|\\
&\leq\epsilon'+\delta+\epsilon'\leq\epsilon.
\end{aligned}
\end{equation}

Notice that by \eqref{eq:badX}, the measure of the set  $\{t\in[L,L(1+\kappa)]:T^f_t(x,s)\in X(\epsilon)\}$ is at least $(1-\epsilon)\kappa L$.
We set $p_L:=q_{N(x,s,L)}$. Then, by \eqref{eq:ann}, for any $L\in[\kappa^{-2},M]$, we have:
$$|\{t\in[L,L(1+\kappa)]:d^f(T^f_t(x),T^f_{t-p_L}(x'))\}|\geq(1-\epsilon)\kappa L$$
and $|p_M|=|q_{N(x,s,M)}|=|q_{M'}|\geq\frac{q}{2}$, which finishes the proof of strong R-property of special flow $T_t^f$.
\end{proof}

In order to use the above lemma to prove the strong R-property of time change of Heisenberg nilflows, we use the following crucial lemma from \cite{ForK}:

\begin{lemma}[Lemma 5.6, \cite{ForK}]\label{le:est} Let $f\in W^s(\T^2)$ with $s>7/2$. There exists $C_1=C_{\alpha,f}>0$ such that for any $(x,y)$ and $(x',y')$, for every $n\in\mathbb N$, $$|S_n(f)(x,y)-S_n(f)(x',y')|\le C_1(|y-y'|n^{1/2}+|x-x'|n^{3/2}).$$
\end{lemma}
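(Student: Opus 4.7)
The plan is to exploit the skew-product structure of $T_{\alpha,\beta}$: an easy induction gives
\[
T_{\alpha,\beta}^k(x,y) = \bigl(x+k\alpha,\; y+kx+\tbinom{k}{2}\alpha+k\beta\bigr),
\]
so the Jacobian of $T^k$ is the unipotent matrix $\bigl(\begin{smallmatrix}1 & 0 \\ k & 1\end{smallmatrix}\bigr)$. Consequently
\[
\partial_y(f\circ T^k) = (\partial_y f)\circ T^k, \qquad \partial_x(f\circ T^k) = (\partial_x f)\circ T^k + k\,(\partial_y f)\circ T^k,
\]
and by the mean value theorem along the segment from $(x,y)$ to $(x',y')$ in a fundamental domain, the estimate reduces to bounding $\sup\|\partial_y S_n(f)\|_\infty$ and $\sup\|\partial_x S_n(f)\|_\infty$ on $\T^2$.

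The core ingredient is a Koksma-type bound of the form
\[
|S_n(g)(x,y)| \le C_\alpha \|g\|_s\, n^{1/2}\qquad \text{for every } g\in W^s(\T^2) \text{ with } \textstyle\int g=0 \text{ and } s>7/2.
\]
To prove this, Fourier-expand $g$ and observe that each mode $(m,\ell)$ contributes a quadratic exponential sum $\sum_{k=0}^{n-1} e^{2\pi i(Ak+Bk^2)}$ with quadratic coefficient $B=\ell\alpha/2$. For $\ell\ne 0$, Weyl's inequality together with the bounded-type hypothesis on $\alpha$ (which controls the continued-fraction convergents of $\ell\alpha/2$ uniformly in $\ell$) yields a bound of shape $O(|\ell|^{1/2}n^{1/2})$ up to logarithms; the $\ell=0$ slice reduces to a classical Koksma bound for rotation by $\alpha$. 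Summing the modewise estimates against $|\hat g(m,\ell)|$ via Cauchy--Schwarz against the Sobolev norm converges precisely at the threshold $s>7/2$, accounting for the $|\ell|^{1/2}$ loss from Weyl together with one derivative used in the MVT reduction.

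Once the Koksma bound is in hand, the $|y-y'|n^{1/2}$ term follows at once from $\partial_y S_n(f)=S_n(\partial_y f)$. For the $x$-derivative, Abel summation applied to the cross contribution gives
\[
\sum_{k=0}^{n-1} k\,(\partial_y f)(T^k(x,y)) = (n-1)S_n(\partial_y f) - \sum_{k=1}^{n-1} S_k(\partial_y f),
\]
and inserting the $O(k^{1/2})$ Koksma bound at every scale $k\le n$ yields an $O(n^{3/2})$ estimate, which absorbs the trivial $O(n^{1/2})$ piece from $S_n(\partial_x f)$. This delivers the $|x-x'|n^{3/2}$ term. The principal obstacle is the uniform quadratic Weyl-sum estimate underlying the Koksma bound: one needs the bounded-type continued-fraction control on $\alpha$ to hold simultaneously for all frequencies $\ell$ with an explicit polynomial dependence, and it is precisely balancing this loss against the Sobolev norm that forces $s>7/2$.
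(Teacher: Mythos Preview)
The paper does not prove this lemma; it is quoted verbatim as Lemma~5.6 of Forni--Kanigowski \cite{ForK} and used as a black box in the proof of Proposition~\ref{pro:heis}. So there is no ``paper's own proof'' to compare against.

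Your sketch is the natural route and is almost certainly close in spirit to the argument in \cite{ForK}: reduce via the mean value theorem to sup-norm bounds on $\partial_y S_n(f)$ and $\partial_x S_n(f)$; use the chain rule for the skew product to split $\partial_x S_n(f)=S_n(\partial_x f)+\sum_{k<n} k\,(\partial_y f)\circ T^k$; control each piece by an $n^{1/2}$ growth bound on Birkhoff sums of zero-mean functions, obtained modewise from quadratic Weyl sums with the bounded-type hypothesis on $\alpha$ feeding the rational-approximation input; and Abel-sum the cross term to promote $n^{1/2}$ to $n^{3/2}$. The Abel-summation identity you wrote is correct and the resulting $n^{3/2}$ estimate is clean.

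One point to tighten: your regularity bookkeeping is internally inconsistent. You state the Koksma-type bound for $g\in W^s$ with $s>7/2$, but then apply it to $g=\partial_y f$ and $g=\partial_x f$, which lie only in $W^{s-1}$. Taken at face value this would force $f\in W^s$ with $s>9/2$, not $s>7/2$. You seem aware of this (``accounting for\ldots one derivative used in the MVT reduction''), but then the Koksma bound itself must be stated at a lower threshold---presumably $s>5/2$---so that spending one derivative on the MVT lands the hypothesis on $f$ at $s>7/2$. Pinning down exactly where the $7/2$ falls requires a careful count of the $|\ell|$-loss in the Weyl estimate (the bounded-type constant for $\ell\alpha$ degrades with $|\ell|$) against the Cauchy--Schwarz summation, and that is the part of your sketch that is only gestured at.
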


We will also need the following result
\begin{proposition}[Proposition 6.1., \cite{ForK}]\label{propfork} For any $\alpha$ of bounded type and for any $f\in W^s(\T^2)$, $s>7/2$, there exists a constant $D_{\alpha,f}>0$ such that the following holds. For every $(x,y,),(x',y')\in \T^2$ let $T:=\min\{|x-x'|^{-2/3},|y-y'|^{-2}\}$. There exists $n_0=n_0(x,y,x',y')\in [0,D_{\alpha,f}T]$ such that
$$
|S_{n_0}(f)(x,y)-S_{n_0}(f)(x',y')|>1.
$$
\end{proposition}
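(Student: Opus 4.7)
The plan is to combine the Sobolev bound on $f$ with a Fourier/Weyl-sum analysis of the skew product, using the bounded-type assumption on $\alpha$. The critical time scale $T$ is determined by the saturation of Lemma \ref{le:est}: at $n \sim T$ both terms of the upper bound $C_1(|\Delta y|n^{1/2} + |\Delta x|n^{3/2})$ are of order one, so one expects $|\Phi_{n}| := |S_n(f)(x,y) - S_n(f)(x',y')|$ to become of order one as well, and the task is to promote this heuristic to a pointwise lower bound $>1$ for some $n_0 \le D_{\alpha,f} T$.

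After subtracting $\int f\,d\mu$, I would expand $f = \sum_{(k,\ell)\ne 0} \hat f(k,\ell)\, e^{2\pi i(kx+\ell y)}$ and use the explicit iterate $T_{\alpha,\beta}^j(x,y) = (x+j\alpha,\, y+jx+\binom{j}{2}\alpha+j\beta)$ to write each $S_n(f)(x,y)$ as a sum of quadratic exponential sums in $j$. The difference $\Phi_n$ then splits naturally into two pieces: one driven by $\Delta y$, which for each Fourier mode reduces to a first-order difference of a linear-in-$n$ Birkhoff sum over the circle rotation $x\mapsto x+\alpha$ and is handled by Denjoy--Koksma for bounded-type $\alpha$; and one driven by $\Delta x$, which involves a genuinely quadratic (Heisenberg-type) Weyl sum whose behaviour for bounded-type $\alpha$ is controlled by Weyl differencing.

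With these tools in hand, I would choose $n_0$ comparable to a continued-fraction denominator $q_m$ of $\alpha$ sitting near $T$; such $q_m$ exist in $[0,D_{\alpha,f}T]$ because bounded type gives $q_{m+1}\le C_\alpha q_m$, so the renormalization scales are dense. At such $n_0$ one shows that a chosen low-frequency mode of $f$ contributes at least a fixed positive amount to $\Phi_{n_0}$, while the Sobolev regularity $s>7/2$ ensures that the tail over the remaining Fourier modes is summable and dominated by $\|f\|_{W^s}$; taking $D_{\alpha,f}$ large in terms of $\|f\|_{W^s}$ and $C_\alpha$ gives $|\Phi_{n_0}|>1$.

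The main obstacle is producing this lower bound uniformly in the pair of points. Two sub-difficulties appear. First, one must handle the two regimes $T=|\Delta x|^{-2/3}$ and $T=|\Delta y|^{-2}$ separately, since each selects a different dominant Fourier mode and a different differencing order (quadratic Weyl estimate versus a first difference at the rotation level). Second, one needs to preclude accidental cancellation between Fourier modes at the selected scale $n_0$; the natural route is a pigeonhole/renormalization argument, showing that if cancellation persisted across all available scales $q_m\le D_{\alpha,f}T$, the resulting system of constraints on $\hat f$ would force a structured form for $f$ incompatible with its positivity (in particular with $\hat f(0,0)>0$) together with the irrationality of $\alpha$ and the rational/irrational classification of $\beta$. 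This renormalization-based non-cancellation argument is the technical heart of the proof.
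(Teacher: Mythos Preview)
The paper does not prove this proposition: it is quoted verbatim as Proposition~6.1 of \cite{ForK} and used as a black box in the proof of Proposition~\ref{pro:heis}. So there is no ``paper's own proof'' to compare against; any assessment has to be of your sketch on its own terms (and, ideally, against the argument in \cite{ForK}).

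Your overall strategy---Fourier expansion, quadratic Weyl sums for the skew product, renormalization along the denominators $q_m$, with bounded type guaranteeing dense scales---is the right toolkit and is indeed what is used in \cite{ForK}. But there is a genuine gap in your non-cancellation step. You propose to rule out persistent cancellation by deriving constraints on $\hat f$ that are ``incompatible with its positivity (in particular with $\hat f(0,0)>0$)''. This cannot work: the zero mode contributes $n\hat f(0,0)$ to both $S_n(f)(x,y)$ and $S_n(f)(x',y')$ and cancels exactly in $\Phi_n$, so neither positivity of $f$ nor the value $\hat f(0,0)$ enters the lower bound at all. In fact the statement as written is simply false if $f$ is a constant (or, more generally, a coboundary plus a constant): then $\Phi_n\equiv 0$ for all $n$. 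The missing hypothesis is that $f$ is not a (measurable) coboundary over $T_{\alpha,\beta}$; this is precisely the ``nontrivial'' assumption that appears explicitly in Proposition~\ref{pro:heis} and implicitly in \cite{ForK}. The actual mechanism for the lower bound is that non-triviality forces a genuine growth of the Birkhoff sums in a distributional sense (invariant distributions/renormalization cocycle for the Heisenberg nilflow), and the bounded-type hypothesis converts this into a pointwise lower bound at some scale $n_0\asymp T$. Your sketch should replace the appeal to $\hat f(0,0)$ with this non-coboundary input.
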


\begin{proposition}\label{pro:heis}
If $\tau\in W^s(\mathbb{T}^2)$ is nontrivial, $s>\frac{7}{2}$ and $\alpha$ is of bounded type, then $T_t^\tau$ has the strong R-property.
\end{proposition}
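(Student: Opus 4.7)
The plan is to apply Lemma \ref{lm:specialstrongR} to the special flow representation $T_t^{\tau}$ over the base $T_{\alpha,\beta}(x,y)=(x+\alpha,y+x+\beta)$, so I only need to verify condition \textbf{R1'} for the discrete system $(T_{\alpha,\beta},\tau)$ on $\mathbb{T}^2$. I will take $q:=1$ and $Z':=\mathbb{T}^2$; the nontriviality of $\tau$ and bounded type of $\alpha$ enter through Proposition \ref{propfork}.

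Given $\epsilon'>0$, choose $\kappa'\in(0,\epsilon')$ small (to be fixed at the end of the argument) and let $\delta'>0$ be even smaller (depending on $\kappa'$). For any two points $(x,y),(x',y')\in\mathbb{T}^2$ with $d((x,y),(x',y'))<\delta'$, let $T:=\min\{|x-x'|^{-2/3},|y-y'|^{-2}\}$ and define
$$
M':=\min\{\,n\geq \kappa'^{-2}\;:\;|S_n(\tau)(x,y)-S_n(\tau)(x',y')|>3/4\,\}.
$$
Existence of $M'\leq D_{\alpha,\tau}T$ follows from Proposition \ref{propfork}. To get the required lower bound $M'\geq 8C_0^2\kappa'^{-2}$, apply Lemma \ref{le:est}: for $n\leq 8C_0^2\kappa'^{-2}$ the right-hand side $C_1(|y-y'|n^{1/2}+|x-x'|n^{3/2})$ is $<3/4$ once $\delta'$ is small enough relative to $\kappa'$. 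For each $L'\in[\kappa'^{-2},M']$ set $q_{L'}:=S_{L'}(\tau)(x,y)-S_{L'}(\tau)(x',y')$. By minimality of $M'$, for $L'<M'$ we have $|q_{L'}|\leq 3/4\leq q$. Sobolev embedding gives $\tau\in C^1(\mathbb{T}^2)$, so the single-step jump $|\tau(T^{M'-1}(x,y))-\tau(T^{M'-1}(x',y'))|$ is Lipschitz-bounded by $d(T^{M'-1}(x,y),T^{M'-1}(x',y'))=O(\delta'^{1/3})$; hence $|q_{M'}|\in(3/4,1]$, yielding (b) of \textbf{R1'}.

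To verify (a), the orbit estimate is elementary: from $T_{\alpha,\beta}^n(x,y)=(x+n\alpha,y+nx+\binom{n}{2}\alpha+n\beta)$ one gets
$$
d(T^n(x,y),T^n(x',y'))\leq |x-x'|+|y-y'|+n|x-x'|,
$$
and for $n\leq M'(1+\kappa')\leq 2D_{\alpha,\tau}T$ together with $T|x-x'|\leq |x-x'|^{1/3}$, this is $O(\delta'^{1/3})<\epsilon'$. For the Birkhoff sum bound, the cocycle identity reduces matters to controlling $|S_k(\tau)(T^{L'}(x,y))-S_k(\tau)(T^{L'}(x',y'))|$ for $k=n-L'\in[0,\kappa' L']$, and applying Lemma \ref{le:est} to $T^{L'}(x,y), T^{L'}(x',y')$ (whose coordinate differences are $|x-x'|$ and $|y-y'+L'(x-x')|$) produces
$$
C_1\bigl((|y-y'|+L'|x-x'|)k^{1/2}+|x-x'|k^{3/2}\bigr).
$$
Each of the three summands is $O(\kappa'^{1/2})$ after invoking $L'\leq M'\leq D_{\alpha,\tau}T$ combined with the defining inequalities $|y-y'|T^{1/2}\leq 1$ and $|x-x'|T^{3/2}\leq 1$; choosing $\kappa'$ sufficiently small makes this bound $<\epsilon'$, completing the verification.

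The principal technical difficulty lies in the simultaneous balancing of scales. The value $M'$ must be large enough (at least $8C_0^2\kappa'^{-2}$) yet small enough to be bounded by $D_{\alpha,\tau}T$, so that the polynomial estimates of Lemma \ref{le:est} collapse to $O(\kappa'^{1/2})$ over intervals of relative length $\kappa'$. What makes this work is that Proposition \ref{propfork} caps $M'$ precisely at the scale where Lemma \ref{le:est} begins to lose control, so the two ingredients dovetail exactly -- the polynomial upper bound and the scale-matching lower bound both depend on the same quantity $T$ in the right way.
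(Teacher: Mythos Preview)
Your proof is correct and follows essentially the same route as the paper's: reduce to the base via Lemma \ref{lm:specialstrongR}, use Proposition \ref{propfork} to locate $M'$ at scale $O(T)$, and combine Lemma \ref{le:est} with the cocycle identity to show that the Birkhoff-sum difference varies by $O(\kappa'^{1/2})$ over windows of relative length $\kappa'$. The only cosmetic difference is your choice of threshold $3/4$ together with the one-step Lipschitz bound to pin $|q_{M'}|\in(3/4,1]$; the paper takes threshold $1$ directly and is slightly less explicit about the upper bound on $|q_{M'}|$.
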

\begin{proof}

We will use Lemma \ref{lm:specialstrongR}. Fix $q=1$ and fix $\epsilon>0$. Let $\kappa=\kappa(\epsilon)=\epsilon^{10}$, $Z=\T^2$ and $\delta=\kappa^{10}$. Take $(x,y),(x',y')\in \T^2$ with $d((x,y),(x',y'))\leq \delta$.  For $n\in [0,D_{\alpha,f}T]$ (from Proposition \ref{propfork}), let
\be\label{eq:an}
a_n:=S_n(f)(x,y)-S_n(f)(x',y').
\ee
Notice first that $T^n(x,y)=(x+n\alpha, y+nx+\frac{n(n-1)}{2}\alpha)$ and, by the definition of $T$ in Proposition \ref{propfork} and the definition of $\delta$, it follows that for every  $n\in [0,D_{\alpha,f}T]$, we have
\be\label{eq:aiso}
d(T^n(x,y),T^n(x',y'))<\epsilon.
\ee
Notice that for $n\in [0,D_{\alpha,f}T]$ and $m\in [n,(1+\kappa)n]$, we have by cocycle identity
$$
|a_m-a_n|= |S_{n-m}(f)(T^n(x,y))-S_{n-m}(f)(T^n(x',y'))|
$$
and $T^n(x,y)=(x+n\alpha,y+nx+\frac{n(n-1)}{2}\alpha)$. So by Lemma \ref{le:est} and since $m-n\leq \kappa n$, we have
\begin{multline}\label{eeas}
|a_m-a_n|\leq C_1[|y-y'+n(x-x')|n^{1/2}+|x-x'|n^{3/2}]\leq \\2C_1\kappa^{1/2}\left[|y-y'|D^{1/2}_{\alpha,f}T^{1/2}+|x-x'|D^{3/2}_{\alpha,f}T^{3/2}\right]<\epsilon^2,
\end{multline}
By the definition of $T$ and $\kappa$.

Let $M'\in [0,D_{\alpha,f}T]$ be the smallest number  such that $|a_{M'}|\geq 1$ (notice that by Proposition \ref{propfork} such $M'$ always exists). Moreover, since $|x-x'|,|y-y'|<\delta$, by Lemma \ref{le:est} and the definition of $\delta$ it follows that $M'\geq \kappa^{-2}$.
Take $L'\in [\kappa^{-2},M']$ and let $q_{L'}=a_{L'}$. By the definition of $M'$ it follows that $|q_{L'}|\leq 1$. By \eqref{eeas} it follows that for $m\in[L',(1+\kappa)L']$, we have
$$
|a_m-a_{L'}|\leq \epsilon^2
$$
and moreover $a_{M'}\geq 1-\epsilon^2>1/2$. This together with \eqref{eq:aiso} gives {\bf R1'} and finishes the proof of Proposition \ref{pro:heis}.
\end{proof}

\section{Proof of Theorem \ref{thm:main}}

Theorem \ref{thm:main} now follows from Theorem \ref{thm:dis}, Proposition \ref{pro:cent}, Proposition \ref{pro:horo1}, Proposition \ref{pro:horo2} and Proposition \ref{pro:heis}.

More precisely, Proposition \ref{pro:cent} shows that a unipotent flow $u_t$ defined on $G/\Gamma$ induced by $U\in\mathfrak{g}$ with $\operatorname{GR}(U)>3$ satisfies $C(q)$-property for every $q>0$; Proposition \ref{pro:horo1} shows that a time change of horocycle flow on $\operatorname{SL}(2,\mathbb{R})/\Gamma$ has strong R-property if $\tau\in C^1(\operatorname{SL}(2,\mathbb{R})/\Gamma)$ and $\tau>0$; Proposition \ref{pro:horo2} guarantees that horocycle flow $v_t$ defined on a compact negatively variable curvature surface with uniform parametrization has strong R-property and Proposition \ref{pro:heis} shows that a time change $T_t^{\tau_1}$ of bounded type Heisenberg nilflow satisfies $\tau_1\in W^s(\mathbb{T}^2)$ for $s>\frac{7}{2}$, then $T_t^{\tau_1}$ has strong R-property. Combining these results with Theorem \ref{thm:dis}, we know that $u_t$ is disjoint with $h_t^{\tau}$, $v_t$ and $T_t^{\tau_1}$, respectively.


\end{document}